\theoremstyle{definition}
\newtheorem{theorem}{Theorem}[section]
\newtheorem{lemma}[theorem]{Lemma}
\newtheorem{proposition}[theorem]{Proposition}
\newtheorem{corollary}[theorem]{Corollary}
\newtheorem{conjecture}{Conjecture}
\newtheorem{claim}{Claim}
\newtheorem{case}{Case}[theorem]
\newtheorem{subcase}{Case}[case]
\newtheorem{subsubcase}{Case}[subcase]
\newtheorem{example}{Example}[section]
\theoremstyle{remark}
\newtheorem{remark}{Remark}[section]
\crefname{claim}{Claim}{Claims}
\crefname{case}{Case}{Cases}
\crefname{subcase}{Case}{Cases}
\crefname{subsubcase}{Case}{Cases}
\crefname{conjecture}{Conjecture}{Conjectures}
\let\eqref\labelcref
\crefname{equation}{}{}
\crefname{enumi}{}{}
\renewcommand{\overrightarrow}[1]{\smash{\vec{#1}}}
\newlength{\revdirraise}
\newlength{\revdirextraraise}
\renewcommand{\overleftarrow}[1]{\smash{%
	\settoheight{\revdirraise}{$#1$}%
	\settodepth{\revdirextraraise}{$#1$}%
	\ooalign{$#1$\cr{\raisebox{\revdirraise+\revdirextraraise+0.85pt}%
		{\rlap{$\mkern4.1mu$\rotatebox[origin=c]{180}%
			{$\vec{\phantom #1}$}}}}}}}
\DeclareMathOperator{\diam}{diam}
\DeclarePairedDelimiter{\card}{\lvert}{\rvert}
\DeclarePairedDelimiter{\set}{\lbrace}{\rbrace}
\DeclarePairedDelimiterX{\defset}[2]{\lbrace}{\rbrace}
	{\,#1:#2\,}
\DeclarePairedDelimiter{\floor}{\lfloor}{\rfloor}
\newcommand{\internalvertex}[2]{node [circle, inner sep=0pt, outer sep=0pt, minimum size=4pt, fill=#1, solid, draw=#2] {}}
\newcommand{\vertex}{\internalvertex{black}{black}}
\newcommand{\minivertex}{node [circle, inner sep=0pt, outer sep=0pt, minimum size=3pt, fill=black, draw=black] {}}
\newcommand{\point}{\node [inner sep=0pt,outer sep=0pt,minimum size=0pt]}
\newcommand{\figFparts}{%
\begin{tabular}{c@{\kern2em}c}
\figTone&\figTtwo\\[2em]
\figTthree&\figTfour
\end{tabular}}
\newcommand{\figTone}{%
\begin{tikzpicture}[y=0.87cm]
\point (a) at (-1,0) {};
\point (b) at (2,0) {};
\figToneblock
\draw (a) \vertex (b) \vertex;
\draw[overlay] (-1.15,0.85) node {$T_1$:};
\draw[overlay] (a) node [anchor=south] {$a$};
\draw[overlay] (b) node [anchor=south] {$b$};
\end{tikzpicture}}
\newcommand{\figTtwo}{%
\begin{tikzpicture}[y=0.87cm]
\point (a) at (-2,0) {};
\point (b) at (3,0) {};
\figTtwoblock
\draw (a) \vertex (b) \vertex;
\draw[overlay] (-2.15,0.85) node {$T_2$:};
\draw[overlay] (a) node [anchor=south] {$a$};
\draw[overlay] (b) node [anchor=south] {$b$};
\end{tikzpicture}}
\newcommand{\figTthree}{%
\begin{tikzpicture}[y=0.87cm]
\point (a) at (-1.5,0) {};
\point (b) at (2.5,0) {};
\figTthreeblock
\draw (a) \vertex (b) \vertex;
\draw[overlay] (-1.65,0.85) node {$T_3$:};
\draw[overlay] (a) node [anchor=south] {$a$};
\draw[overlay] (b) node [anchor=south] {$b$};
\end{tikzpicture}}
\newcommand{\figTfour}{%
\begin{tikzpicture}[y=0.87cm]
\point (a) at (-2,0) {};
\point (b) at (3,0) {};
\figTfourblock
\draw (a) \vertex (b) \vertex;
\draw[overlay] (-2.15,0.85) node {$T_4$:};
\draw[overlay] (a) node [anchor=south] {$a$};
\draw[overlay] (b) node [anchor=south] {$b$};
\end{tikzpicture}}
\newcommand{\figFfourtwo}{%
\begin{tabular}{c@{\kern1em}c@{\kern1em}c@{\kern1em}c@{\kern1em}c@{\kern1em}c}
\figF11&\figF13&\figF12&\figF14&\figF44\\[1em]
\figF33&\figF32&\figF34&\figF22&\figF24
\end{tabular}}
\newcommand{\figF}[2]{%
\bgroup\let\vertex\minivertex
\begin{tikzpicture}[scale=0.42,y=0.87cm]
\point (a) at ({-0.2-\numberstosize{#1}{#2}/2},0) {};
\point (b) at ({1.2+\numberstosize{#1}{#2}/2},0) {};
\begin{scope}[shift={(0,1)}]
\numbertoTblock{#1}
\end{scope}
\begin{scope}[shift={(0,-1)}]
\numbertoTblock{#2}
\end{scope}
\draw (a) \vertex (b) \vertex;
\end{tikzpicture}
\egroup}
\newcommand{\numbertoTblock}[1]{%
	\ifnum #1=1 \figToneblock   \fi
	\ifnum #1=2 \figTtwoblock   \fi
	\ifnum #1=3 \figTthreeblock \fi
	\ifnum #1=4 \figTfourblock  \fi
}
\newcommand{\numbertosize}[1]{%
	\ifnum #1=1 1 \fi
	\ifnum #1=2 3 \fi
	\ifnum #1=3 2 \fi
	\ifnum #1=4 3 \fi
}
\newcommand{\numberstosize}[2]{%
	\ifnum \numbertosize{#1}>\numbertosize{#2}
		\numbertosize{#1}
	\else
		\numbertosize{#2}
	\fi
}
\newcommand{\figToneblock}{%
\begin{scope}[shift={(-1,0)}]
\foreach \y in {-0.5,0.5}{
	\draw (a) -- (1,\y) (2,\y) -- (b);
	\foreach \z in {-0.5,0.5}{
		\draw (1,\y) -- (2,\z);}}
\foreach \x in {1,2}{
	\draw (\x,-0.5) -- (\x,0.5);}
\foreach \x in {1,2}{
	\foreach \y in {-0.5,0.5}{
		\draw (\x,\y) \vertex;}}
\end{scope}}
\newcommand{\figTtwoblock}{%
\begin{scope}[shift={(-2,0)}]
\foreach \y in {-0.5,0.5}{
	\draw (a) -- (1,\y) (4,\y) -- (b);
	\draw (2,\y) -- (3,\y);
	\foreach \x in {1,3}{
		\foreach \z in {-0.5,0.5}{
			\draw (\x,\y) -- (\x+1,\z);}}}
\foreach \x in {1,...,4}{
	\draw (\x,-0.5) -- (\x,0.5);}
\foreach \x in {1,...,4}{
	\foreach \y in {-0.5,0.5}{
		\draw (\x,\y) \vertex;}}
\end{scope}}
\newcommand{\figTthreeblock}{%
\begin{scope}[shift={(-1.5,0)}]
\foreach \y in {-0.5,0.5}{
	\draw (a) -- (1,\y) (3,\y) -- (b);
	\foreach \x in {1,...,2}{
		\foreach \z in {-0.5,0.5}{
			\draw (\x,\y) -- (\x+1,\z);}}}
\foreach \x in {1,3}{
	\draw (\x,-0.5) -- (\x,0.5);}
\foreach \x in {1,...,3}{
	\foreach \y in {-0.5,0.5}{
		\draw (\x,\y) \vertex;}}
\end{scope}}
\newcommand{\figTfourblock}{%
\begin{scope}[shift={(-2,0)}]
\foreach \y in {-0.5,0.5}{
	\draw (a) -- (1,\y) (4,\y) -- (b);
	\foreach \x in {1,...,3}{
		\foreach \z in {-0.5,0.5}{
			\draw (\x,\y) -- (\x+1,\z);}}}
\foreach \x in {1,4}{
	\draw (\x,-0.5) -- (\x,0.5);}
\foreach \x in {1,...,4}{
	\foreach \y in {-0.5,0.5}{
		\draw (\x,\y) \vertex;}}
\end{scope}}
\newcommand{\figclawfreeexample}{%
\begin{tikzpicture}
\begin{scope}[shift=(210:2)] 
	\begin{scope}[shift=(120:1.3)]
		\point (v1-1) at (-30:1.5) {};
		\point (v1-2) at (-50:1.5) {};
		\point (v1-3) at (-70:1.5) {};
		\point (v1-4) at (-90:1.5) {};
	\end{scope}
\end{scope}
\begin{scope}[shift=(150:2)] 
	\begin{scope}[shift=(-120:1.3)]
		\point (v2-1) at (30:1.5) {};
		\point (v2-2) at (50:1.5) {};
		\point (v2-3) at (70:1.5) {};
		\point (v2-4) at (90:1.5) {};
	\end{scope}
\end{scope}
\begin{scope}[shift=(90:2)] 
	\point (v3-1) at (0,0.3) {};
	\point (v3-2) at (0,-0.3) {};
\end{scope}
\begin{scope}[shift=(30:2)] 
	\begin{scope}[shift=(-60:1.3)]
		\point (v4-1) at ( 90:1.5) {};
		\point (v4-2) at (110:1.5) {};
		\point (v4-3) at (130:1.5) {};
		\point (v4-4) at (150:1.5) {};
	\end{scope}
\end{scope}
\begin{scope}[shift=(-30:2)] 
	\begin{scope}[shift=(60:1.35)]
		\point (v5-1) at (- 95:1.5) {};
		\point (v5-2) at (-120:1.5) {};
		\point (v5-3) at (-145:1.5) {};
	\end{scope}
\end{scope}
\begin{scope}[shift=(-90:2)] 
	\point (v6-1) at (0,0.3) {};
	\point (v6-2) at (0,-0.3) {};
\end{scope}
\foreach \x in {1,2}{
	\foreach \y in {1,2,3,4}{
		\draw (v3-\x) -- (v4-\y);
		\draw (v6-\x) -- (v1-\y);}}
\foreach \x in {1,2,3,4}{
	\foreach \y in {1,2,3,4}
		\draw (v1-\x) -- (v2-\y);
	\foreach \y in {1,2}
		\draw (v2-\x) -- (v3-\y);
	\foreach \y in {1,2,3}
		\draw (v4-\x) -- (v5-\y);}
\foreach \x in {1,2,3}
	\foreach \y in {1,2}
		\draw (v5-\x) -- (v6-\y);
\draw (v1-1) -- (v1-2) -- (v1-3) -- (v1-4) -- (v1-1);
\draw (v2-1) -- (v2-2) -- (v2-3) -- (v2-4) -- (v2-1);
\foreach \x in {1,2,3}
	\foreach \y in {\x,...,4}
		\draw (v4-\x) -- (v4-\y);
\foreach \x in {1,2}
	\foreach \y in {\x,...,3}
		\draw (v5-\x) -- (v5-\y);
\draw (v6-1) -- (v6-2);
\foreach \x in {1,2,4}
	\foreach \y in {1,2,3,4}
		\draw (v\x-\y) \vertex;
\foreach \x in {3,6}
	\foreach \y in {1,2}
		\draw (v\x-\y) \vertex;
\foreach \y in {1,2,3}
	\draw (v5-\y) \vertex;
\draw [dashed,rotate=210] (2,0.02) ellipse (1 and 0.4);
\draw [dashed,rotate=150] (2,-0.02) ellipse (1 and 0.4);
\draw [dashed,rotate= 90] (2,0) ellipse (0.6 and 0.3);
\draw [dashed,rotate= 30] (2,0.02) ellipse (1 and 0.4);
\draw [dashed,rotate=-30] (2,-0.02) ellipse (0.9 and 0.4);
\draw [dashed,rotate=-90] (2,0) ellipse (0.6 and 0.3);
\node at (210:3)   [anchor=  30] {$V_1$};
\node at (150:3)   [anchor= -30] {$V_2$};
\node at ( 90:2.6) [anchor= -90] {$V_3$};
\node at ( 30:3)   [anchor=-150] {$V_4$};
\node at (-30:2.9) [anchor= 150] {$V_5$};
\node at (-90:2.6) [anchor=  90] {$V_6$};
\end{tikzpicture}}
\newcommand{\figcounterexample}{%
\begin{tikzpicture}
\foreach \x in {-1,1}{
	\foreach \y in {-1,0,1}{
		\begin{scope}[x=\x cm,shift={(0.8,2.2*\y)}]
			\point (v-\x-\y-1) at (120:0.4) {};
			\point (v-\x-\y-2) at (240:0.4) {};
			\point (u-\x-\y-1) at ( 30:1.3) {};
			\point (u-\x-\y-2) at ( 10:1.3) {};
			\point (u-\x-\y-3) at (-30:1.3) {};
			\point (u-\x-\y-4) at (-10:1.3) {};
			\point (w-\x-\y)   at (  0:2.1) {};
		\end{scope}}
	\begin{scope}[x=\x cm,shift={(4.88,0)}]
		\point (h-\x-1-1)  at (120:1.25) {};
		\point (h-\x-1-2)  at (144:1.25) {};
		\point (h-\x-0-1)  at (168:1.25) {};
		\point (h-\x-0-2)  at (192:1.25) {};
		\point (h-\x--1-1) at (216:1.25) {};
		\point (h-\x--1-2) at (240:1.25) {};
	\end{scope}}
\foreach \x in {-1,1}{
	\foreach \y in {-1,0,1}{
		\draw (w-\x-\y) -- (h-\x-\y-1) -- (h-\x-\y-2) -- (w-\x-\y);
		\foreach \z in {1,2,3,4}{
			\draw (w-\x-\y) -- (u-\x-\y-\z);
			\foreach \w in {\z,...,4}
				\draw (u-\x-\y-\z) -- (u-\x-\y-\w);
			\foreach \w in {1,2}
				\draw (u-\x-\y-\z) -- (v-\x-\y-\w);}
			\draw (v-\x-\y-1) -- (v-\x-\y-2);}
	\foreach \z in {1,2}{
		\foreach \w in {1,2}{
			\draw (h-\x-1-\z) -- (h-\x-0-\w);
			\draw (h-\x-1-\z) -- (h-\x--1-\w);
			\draw (h-\x-0-\z) -- (h-\x--1-\w);}}}
\foreach \y in {-1,0,1}
	\foreach \z in {1,2}
		\draw (v--1-\y-\z) -- (v-1-\y-\z);
\foreach \x in {-1,1}{
	\foreach \y in {-1,0,1}{
		\foreach \z in {1,2}{
			\draw (v-\x-\y-\z) \vertex;
			\draw (h-\x-\y-\z) \vertex;}
		\foreach \z in {1,2,3,4}
			\draw (u-\x-\y-\z) \vertex;
		\draw (w-\x-\y) \vertex;}}
\end{tikzpicture}}
\newcommand{\innerreduction}{%
	\point (x) at (-7,0) {};
	\point (y) at (-6,0) {};
	
	\draw (x) -- +(120:0.5);
	\draw (x) -- (aend);
	\draw (x) -- node [above] {$e$} (y);
	\draw (y) -- (bend);
	\draw (y) -- +(-60:0.5);
	\draw (x) \vertex node [left=3pt] {$x$};
	\draw (y) \vertex node [right=3pt] {$y$};
	
	\node at (-4.5,0) {$\rightarrow$};
	
	\point (we) at (0,0) {};
	\draw (-3,0)
		\foreach \x/\y in {1/30,2/10,3/-10,4/-30}
			{+(\y:1.5) node (u\x) [inner sep=0pt,outer sep=0pt] {}}
		+(-0.5,1.2) node (w1) [inner sep=0pt,outer sep=0pt] {}
		+(-0.5,-1.2) node (w2) [inner sep=0pt,outer sep=0pt] {};
	\draw (1.5,0)
		\foreach \x/\y in {1/150,2/210,3/90,4/30,5/330,6/270}
			{+(\y:0.5) node (v\x) [inner sep=0pt,outer sep=0pt] {}}
		+(60:1.5) node (w3) [inner sep=0pt,outer sep=0pt] {}
		+(300:1.5) node (w4) [inner sep=0pt,outer sep=0pt] {};
	
	\foreach \x in {1,...,4}{
		\foreach \y in {\x,...,4}
			\draw (u\x) -- (u\y);
		\foreach \y in {1,2,e}
			\draw (u\x) -- (w\y);}
	\foreach \x in {1,...,6}
		\foreach \y in {\x,...,6}
			\draw (v\x) -- (v\y);
	\foreach \x/\y in {1/e,2/e,3/3,4/3,5/4,6/4}
		\draw (v\x) -- (w\y);
	
	\draw (we) \vertex node [above] {$w_e$}
		(w1) \vertex
		(w2) \vertex
		(w3) \vertex
		(w4) \vertex;
	\foreach \x in {1,2,3,4}
		\draw (u\x) \vertex;
	\foreach \x in {1,2,3,4,5,6}
		\draw (v\x) \vertex;
	\draw [dashed] (1.5,0) circle (0.8) +(0,0.8) node [above] {$V_y$};
	\draw [dashed] (-1.7,0) ellipse (0.4 and 1) +(0,1) node [above] {$U_x$};
	}
\newcommand{\figreduction}{%
	\begin{tikzpicture}
	\path (-7,0) +(-120:0.5) node (aend) [inner sep=0pt,outer sep=0pt] {};
	\path (-6,0) +(60:0.5) node (bend) [inner sep=0pt,outer sep=0pt] {};
	\innerreduction
	\end{tikzpicture}}
\newcommand{\fighamreduction}{%
	\begin{tikzpicture}[densely dotted]
	\path (-7,0) +(-120:0.6) node (aend) [inner sep=0pt,outer sep=0pt] {};
	\path (-6,0) +(60:0.6) node (bend) [inner sep=0pt,outer sep=0pt] {};
	\innerreduction
	\draw [thick,solid]
		(-7,0) +(120:0.6) -- (-7,0) -- (-6,0) -- +(-60:0.6)
		(w1) -- (u1) -- (u2) -- (u3) -- (u4) -- (we)
		-- (v2) -- (v1) -- (v3) -- (w3) -- (v4) -- (v5) -- (v6) -- (w4);
	\path (-7,0)
		+(120:0.4) node [anchor=-120] {$d$}
		+(-120:0.4) node [anchor=120] {$a$}
		(-6,0)
		+(-60:0.4) node [anchor=60] {$f$}
		+(60:0.4) node [anchor=-60] {$b$}
		(w1) node [above] {$w_d$}
		(w2) node [below=1pt] {$w_a$}
		(w3) node [right] {$w_b$}
		(w4) node [right] {$w_f$};
	\end{tikzpicture}}
\begin{document}

\title{On Hamiltonicity of regular graphs with bounded second neighborhoods}

\author{Armen S. Asratian%
	\footnote{Department of Mathematics, Linköping University, email: armen.asratian@liu.se},
	Jonas B. Granholm%
	\footnote{Department of Mathematics, Linköping University, email: jonas.granholm@liu.se}}
\date{}
\maketitle


\begin{abstract}
\noindent
Let $\mathcal{G}(k)$ denote the set of
connected $k$-regular graphs~$G$, $k\geq 2$,
where the number of vertices at distance~$2$ from
any vertex in~$G$ does not exceed~$k$.
Asratian (2006) showed (using other terminology) that
a graph $G\in \mathcal{G}(k)$
is Hamiltonian if for each vertex~$u$ of~$G$ the subgraph induced by the set
of vertices at distance at most~2 from~$u$ is 2-connected.
We prove here that in fact all graphs in the sets $\mathcal{G}(3)$, $\mathcal{G}(4)$ and $\mathcal{G}(5)$
are Hamiltonian.
We also prove that the problem of determining whether there exists a Hamilton cycle in
a graph from $\mathcal{G}(6)$ is NP-complete.
Nevertheless we show that
every locally connected graph $G\in \mathcal{G}(k)$, $k\geq 6$,
is Hamiltonian
and that for each non-Hamiltonian cycle~$C$ in~$G$
there exists a cycle~$C'$ of length~$\card{V(C)}+\ell$ in~$G$, $\ell\in\set{1,2}$,
such that $V(C)\subset V(C')$.
Finally, we note that all our conditions for Hamiltonicity apply to infinitely many graphs with large diameters.
\end{abstract}

\bgroup
\noindent
\textbf{Keywords: }%
Regular graphs, Hamilton cycles, locally connected graphs.
\par
\egroup


\section{Introduction}

In this paper we continue our investigation of interconnections between local properties of a graph and
its Hamiltonicity (see, for example,
\cite{asratian06,asratian96,veldman96,asratian18a,asratian18b,asratyan85,hasratian90,asratian98}).

We use~\cite{diestel} for terminology and notation not defined here,
and consider finite undirected graphs without loops and multiple edges only.
A \emph{Hamilton cycle} of a graph~$G=(V(G),E(G))$ is a cycle containing every vertex of~$G$.
A graph that has a Hamilton cycle is called \emph{Hamiltonian}.
There is a vast literature in graph theory devoted to
obtaining sufficient conditions for Hamiltonicity
(see, for example, the surveys~\cite{gould03,gould14,hao13}).
Our paper is devoted to the investigation of Hamiltonicity of $k$-regular graphs, that is,
graphs where all vertices have the same degree~$k$.
It is known~\cite{picoul} that for any fixed $k\geq 3$, determining whether a $k$-regular graph has a Hamilton
cycle is an NP-complete problem.

Dirac~\cite{dirac52} proved that
a graph~$G$ on at least three vertices is Hamiltonian if the degree of every vertex of~$G$
is at least $\card{V(G)}/2$.
Nash-Williams~\cite{nash-williams71} showed that the result of Dirac can be relaxed for regular graphs as follows:
A $k$-regular graph~$G$ with $k\geq 2$
is Hamiltonian if $k\geq {|V(G)|-1\over 2}$.
Jackson~\cite{jackson80} showed that if $G$ is 2-connected then the bound $ {|V(G)|-1\over 2}$ can be replaced by ${1\over 3}|V(G)|$.
Better bounds were obtained for 3-connected $k$-regular graphs 
(see e.g. \cite{broersma96,kuhn,claw:li}).

All the above mentioned conditions for a regular graph~$G$
contain a global parameter of~$G$, namely the number of vertices,
and only apply to graphs with
large vertex degrees (\,$\geq \text{constant}\cdot|V(G)|$\,) and
small diameters (\,$o(|V(G)|)$\,).

Another type of sufficient conditions for Hamiltonicity of a graph~$G$,
which contain no global parameter of~$G$,
was obtained by Chartrand and Pippert~\cite{pippert74},
using the concept of local connectedness.
A graph~$G$ is called \emph{locally connected}
if for every vertex~$u$ of~$G$ the subgraph induced by the set of neighbors of~$u$ is connected.
The result of Chartrand and Pippert~\cite{pippert74} is devoted to graphs with maximum degree at most four
and for regular graphs implies the following:

\begin{proposition}
\label{pippert74}
All connected, locally connected, $k$-regular graphs are Hamiltonian if $k\leq 4$.
\end{proposition}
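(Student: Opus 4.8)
The plan is to prove \Cref{pippert74} by showing that any connected, locally connected, $k$-regular graph with $k\leq 4$ is Hamiltonian, treating the range $k\in\set{2,3,4}$ directly.

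The cases $k\leq 2$ are immediate: a connected $2$-regular graph is a cycle, hence trivially Hamiltonian, so the real content lies in $k\in\set{3,4}$. For these cases my main tool will be the standard technique of taking a \emph{longest cycle} and deriving a contradiction if it is not Hamiltonian. So first I would fix a longest cycle~$C$ in~$G$ and assume for contradiction that some vertex lies outside~$C$. Writing $C=v_1v_2\cdots v_m v_1$ with a fixed orientation, I would exploit local connectedness as follows: for each neighbor $v_i$ of a vertex $w\notin V(C)$, the neighborhood $N(v_i)$ is connected, and this connectedness forces the successor $v_{i+1}$ and predecessor $v_{i-1}$ along~$C$ to interact with $w$ and with each other in a constrained way.

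The heart of the argument is the classical \emph{crossover} or \emph{rotation} obstruction. Because $G$ is $k$-regular with small~$k$, once a neighbor $v_i$ of $w$ is identified on~$C$, the bounded degree severely limits how the edges from $v_{i-1}$, $v_i$, $v_{i+1}$, and $w$ can be arranged; local connectedness of $N(v_i)$ then typically provides an edge within the neighborhood that lets me reroute~$C$ to absorb~$w$, producing a longer cycle and contradicting maximality. I would organize this into a short case analysis on $k=3$ and $k=4$, in each case enumerating the possible adjacencies among the few vertices $\set{v_{i-1},v_i,v_{i+1},w}$ and their neighbors, and in every case exhibiting a longer cycle.

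The main obstacle I expect is the $k=4$ case, where the larger degree allows more configurations, so the rerouting argument requires handling several subcases and carefully verifying that the connectedness of $N(v_i)$ always yields the edge needed to splice $w$ into the cycle; the bookkeeping of which rotated segment to use is where errors tend to creep in. A cleaner alternative, which I would consider if the direct approach becomes unwieldy, is to invoke a known Hamiltonicity criterion for locally connected graphs with bounded maximum degree and simply verify its hypotheses for $k\leq 4$, since \Cref{pippert74} is quoted here as a consequence of the Chartrand--Pippert theorem on graphs of maximum degree at most four rather than as a result to be reproved from scratch.
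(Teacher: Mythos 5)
Your proposal is a plan rather than a proof: for $k\in\{3,4\}$ everything is delegated to a case analysis that you describe but never carry out, and that case analysis is the entire content of the statement. The claim that, around an attachment vertex $v_i$ of a vertex $w$ outside a longest cycle, connectedness of $N(v_i)$ ``typically provides an edge'' letting you splice $w$ into the cycle is exactly the point that needs proving, and it is not true at the purely local level you describe: a triangle $w v z$ with $v\notin V(C)$ and $z\in V(C)$ (cf.\ \cref{triangle}) does not by itself yield a longer cycle; one further needs control over how the neighbours of $v$ and of the successors $z^+,v_i^+$ are distributed along $C$, which is why the published arguments for the $4$-regular case (Chartrand--Pippert's original proof, and the structural characterization used in this paper) go well beyond a one-step rotation around $\set{v_{i-1},v_i,v_{i+1},w}$. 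Until those configurations are actually enumerated and each one closed with an explicit longer cycle, there is a genuine gap. (A minor additional point: as literally stated, $k\le 4$ includes the $1$-regular graph $K_2$, which is connected and locally connected but not Hamiltonian, so the intended reading is $k\ge 2$; your reduction of ``$k\le2$'' to a cycle silently assumes this.)

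For comparison, the paper does not reprove the proposition at all: it is quoted with attribution to Chartrand and Pippert, and after \cref{four:regular} the authors observe that it also follows from their own results, since by a theorem cited from Gordon, Orlovich, Potts and Strusevich every connected, locally connected $k$-regular graph with $k\in\{3,4\}$ is the square of a cycle $C_n$, which belongs to $\mathcal{G}(3)$ if $n=4$ and to $\mathcal{G}(4)$ if $n>4$, hence is Hamiltonian by \cref{regular:three,four:regular}. Your fallback option (simply invoking the known criterion for graphs of maximum degree at most four) coincides with the paper's treatment in the introduction, but it is a citation, not a proof; if you want a self-contained argument in the spirit of this paper, the clean route is the one just described: establish (or quote) the square-of-a-cycle characterization and then apply \cref{regular:three,four:regular}, rather than attempting the longest-cycle rotation analysis from scratch.
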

Locally connected 5-regular graphs were considered in~\cite{kikust75}:
\begin{theorem}[Kikust~\cite{kikust75}]
\label{kikust75}
Every connected, locally connected 5-regular graph~$G$ is Hamiltonian.
\end{theorem}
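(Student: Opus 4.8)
The plan is to argue by contradiction using a longest cycle. Suppose $G$ is a connected, locally connected, $5$-regular graph that is not Hamiltonian, and let $C$ be a longest cycle of $G$. Then $R = V(G) \setminus V(C)$ is nonempty, and since $G$ is connected some vertex $v \in R$ has a neighbor on $C$. Fix an orientation of $C$ and, for $u \in V(C)$, write $u^+$ and $u^-$ for the successor and predecessor of $u$. The whole argument consists in showing that whenever $R \neq \emptyset$ one can construct a cycle longer than $C$ (or otherwise reach a contradiction), so that maximality forces $R = \emptyset$; that is, $C$ is Hamiltonian.

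First I would record the elementary extension primitives that come from maximality of $C$. If $v \in R$ is adjacent to $u \in V(C)$, then $v \not\sim u^+$ and $v \not\sim u^-$: otherwise replacing the cycle edge $uu^+$ (resp.\ $u^- u$) by the path $u\,v\,u^+$ would yield a cycle one longer than $C$. Consequently no vertex of $R$ is adjacent to two consecutive vertices of $C$, and each attachment vertex $u \in V(C)$ spends two of its five edges on $u^-,u^+$ and at least one on $R$, leaving at most two further neighbors. This scarcity is exactly what the degree bound $5$ buys us.

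Next I would feed in local connectedness. Applied at an attachment vertex $u$, the set $N(u)$ induces a connected graph containing both $v \in R$ and $u^+ \in V(C)$; since $v \not\sim u^+$, a shortest $v$--$u^+$ path inside $G[N(u)]$ has an interior, so $v$ has a neighbor $p \in N(u) \setminus \set{u^-,u^+}$, giving a triangle $u\,v\,p$ with $p \sim u$. Writing $N(u) = \set{u^-,u^+,v,p,q}$, I would then split into cases according to whether each of $p,q$ lies on $C$ or in $R$, and according to the corresponding local structure at $v$ (whose neighborhood is likewise connected and, by the primitive above, avoids $u^\pm$). In the favorable cases, the bridging neighbor $p$, or a short chain of such neighbors, returns to a vertex $w \in V(C)$ close enough to $u$ on $C$ that one can reroute the intervening arc through $v$ (and through any $R$-vertices encountered along the chain), thereby absorbing at least one new vertex and producing a longer cycle.

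The main obstacle is the genuinely combinatorial case analysis, and in particular the configuration in which every reconnection of an $R$-neighbor of $u$ lands on cycle vertices that are pairwise non-consecutive, so that no single substitution lengthens $C$. Here I would pass to a global argument: for a component $F$ of $G[R]$ with attachment set $A \subseteq V(C)$, maximality of $C$ implies (by the standard longest-cycle argument) that the successor set $\defset{u^+}{u \in A}$ is independent and contains no neighbor of $F$, while local connectedness, applied around the vertices of $A$ and of $F$, forces strong adjacency relations among them. Reconciling the scarcity imposed by the independent successor set with these forced adjacencies — all under the degree bound $5$ — is where the contradiction should be extracted. The reason $k=5$ is essential is precisely that it leaves each attachment vertex at most three non-cycle edges, keeping the number of local configurations finite and checkable; for $k \ge 6$ this control is lost, consistent with the non-Hamiltonian examples the paper exhibits later.
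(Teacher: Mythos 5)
There is a genuine gap. Note first that the paper itself does not prove this statement: it is quoted as Kikust's theorem and used as a black box, so the only question is whether your argument stands on its own — and it does not, because it stops exactly where the real work begins. Everything you actually establish is the standard opening of any longest-cycle argument: if $C$ is a longest cycle and $v\notin V(C)$ is attached at $u$, then $v\not\sim u^{\pm}$, the successor set of the attachment vertices of a component of $G-V(C)$ is independent and has no neighbors in that component, and local connectedness of $G_1(u)$ yields a triangle $uvp$ with $p\in N(u)\setminus\{u^-,u^+\}$. These facts are correct, but they hold verbatim for every $k$, including $k\ge 6$, where the conclusion is false (the paper's \cref{irzhav} and the locally connected non-Hamiltonian examples behind it show this). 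The entire content of Kikust's theorem lies in the case analysis you describe only in outline — ``split into cases according to whether each of $p,q$ lies on $C$ or in $R$,'' ``reconciling the scarcity \dots is where the contradiction should be extracted'' — and no case is actually carried out, no rerouting is exhibited in the hard configurations, and no contradiction is derived. A plan that defers precisely the step at which the hypothesis $k=5$ must be used, while asserting heuristically that the degree bound ``keeps the number of local configurations finite and checkable,'' is not a proof; the same heuristic would apply with one more neighbor at $k=6$, so you must show concretely which configuration becomes unavoidable for $k\le 5$ and impossible to escape, and this is missing.

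Concretely, what is absent is the analysis of the attachment vertex $u$ with $N(u)=\{u^-,u^+,v,p,q\}$ under all adjacency patterns of $p$ and $q$ (on or off $C$, adjacent or not to $u^{\pm}$, to $v$, to each other), combined with the corresponding analysis inside $N(v)$, terminating in each branch either with an explicit longer cycle or with a violation of $5$-regularity or of local connectedness. Until those branches are written out and closed, the proposal is an announcement of the strategy Kikust-type proofs follow, not a proof of the theorem.
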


The situation is different for locally connected $k$-regular graphs with $k\geq 6$.
\begin{theorem}[Irzhavski~\cite{irzhav}]
\label{irzhav}
For any fixed $k\geq 6$,
determining whether a locally connected $k$-regular graph is Hamiltonian
is an NP-complete problem.
\end{theorem}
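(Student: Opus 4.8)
The plan is to establish the two halves of NP-completeness separately. Membership in NP is routine: a Hamilton cycle, presented as a cyclic ordering of the vertices, is a certificate of polynomial size whose validity (consecutive vertices adjacent, every vertex used exactly once) is checked in linear time; that the input happens to be locally connected and $k$-regular is a property of the instance and plays no role in the verification.

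For NP-hardness I would reduce from the Hamilton cycle problem on cubic graphs, which is NP-complete. Given a cubic graph $H$, I would build a locally connected $k$-regular graph $G$ that is Hamiltonian exactly when $H$ is. The construction is gadget-based: replace each vertex $y$ of $H$ by a clique-based ``vertex gadget'' $V_y$ whose vertices pair off into three ``ports'', one for each edge at $y$, and attach to each edge $e=xy$ a connecting gadget (with a few auxiliary ``link'' vertices) joining the relevant ports of $U_x$ and $V_y$. The clique interiors are chosen so that, after the inter-gadget connections are added, every vertex has degree exactly $k$; this is a bookkeeping constraint that fixes the clique sizes (for the base case $k=6$ one can take $V_y\cong K_6$, each clique vertex contributing $5$ internal neighbors plus one external link). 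Local connectedness is arranged by the same structure: inside a clique every neighborhood is the rest of the clique and hence connected, and the auxiliary link vertices are inserted precisely so that the neighborhood of each port vertex---part inside its clique, part across the connector---still induces a connected subgraph.

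Correctness rests on a correspondence between Hamilton cycles. Because a Hamilton cycle passes through each degree-$3$ vertex of $H$ on exactly two of its three incident edges, I would show that a Hamilton cycle of $G$ must enter and leave each vertex gadget $V_y$ through exactly one pair of ports---sweeping up all of $V_y$'s interior on the way---so that the set of edges of $H$ whose ports are traversed forms a Hamilton cycle of $H$; conversely, any Hamilton cycle of $H$ lifts to one of $G$ by routing through the gadgets accordingly.

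The main obstacle is to make the three requirements---$k$-regularity, local connectedness, and the gadget having only the two intended ``pass-through'' states---hold simultaneously. Local connectedness is the delicate one: forcing a vertex's neighbors to be pairwise reachable among themselves introduces extra adjacencies across gadget boundaries, and these threaten to hand a Hamilton cycle unintended shortcuts that bypass a port. The auxiliary link vertices must be designed to restore connectedness of the relevant neighborhoods without opening any such shortcut, and verifying this ``no-shortcut'' property is the crux of the argument. Two further points round out the proof: the hypothesis $k\geq 6$ is exactly what gives these non-Hamiltonian local configurations room to exist---for $k\leq 5$ every instance is trivially a ``yes''-instance by \cref{pippert74,kikust75}---and to reach all $k\geq 6$ I would add a degree-padding step, enlarging the gadget cliques by a fixed amount, that preserves regularity, local connectedness, and the Hamiltonicity correspondence.
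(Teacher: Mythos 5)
This statement is Irzhavski's theorem, which the paper only cites; it contains no proof of it, so there is no in-paper argument to compare yours against. Judged on its own, your proposal is not yet a proof but a plan: the NP-membership half is fine and routine, but the hardness half is described only at the level of intentions. You never exhibit the gadget. The three properties you correctly identify as needing to hold simultaneously---exact $k$-regularity, local connectedness, and the gadget admitting only the two intended ``pass-through'' traversals---are precisely the content of the theorem, and you explicitly defer the ``no-shortcut'' verification, which is the entire difficulty. A reduction proof of an NP-hardness statement stands or falls with the concrete gadget and the two-directional correctness argument for it; asserting that suitable auxiliary link vertices ``must be designed'' does not establish that they exist.

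To see why this gap is genuine and not a formality, note two things visible in the paper itself. First, the paper's own NP-completeness result (\cref{NP-complete}) uses exactly the clique-and-port style reduction you outline (from Hamiltonicity of 2-connected cubic bipartite planar graphs, replacing vertices by cliques $U_x$, $V_y$ and edges by connector vertices $w_e$), and the graphs it produces are \emph{not} locally connected: the neighborhood of a connector vertex $w_e$ splits into a piece in $U_x$ and a piece in $V_y$ with no edges between them. Repairing this by adding edges across the connector is exactly where unintended Hamilton cycles can appear, and it also inflates second neighborhoods. Second, \cref{like:bondy} shows that every locally connected graph in $\mathcal{G}(k)$, $k\geq 6$, is Hamiltonian, so any correct gadget for Irzhavski's theorem must necessarily have vertices with large second neighborhoods; in particular your sketched base case ($V_y\cong K_6$ with each clique vertex having five internal neighbors and one external link) cannot simply be patched into local connectedness without changing the structure in a way you have not specified or analyzed. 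Until you write down an explicit gadget for $k=6$, verify local connectedness at every vertex (including the link vertices), prove the forced pass-through behaviour of Hamilton cycles in both directions, and check that the padding step preserves all of this for general $k$, the argument is incomplete.
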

Some other results on locally connected graphs can be found in \cite{hendry,gordon11,vanaardt16,aardt}.

Oberly and Sumner~\cite{oberly79} showed that the concept of local connectedness is fruitful for Hamiltonicity of
\emph{claw-free graphs}, that is,
graphs that have no induced subgraph isomorphic to~$K_{1,3}$.

\begin{theorem}[Oberly and Sumner~\cite{oberly79}]
\label{oldthm:oberly}
Every connected, locally connected, claw-free graph on at least 3~vertices is Hamiltonian.
\end{theorem}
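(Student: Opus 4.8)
The plan is to prove the statement by an extremal argument on cycle length, supported by two structural preliminaries. First I would record that connectedness together with local connectedness forces $G$ to be $2$-connected: if $v$ were a cut vertex, its neighbors would be split among distinct components of $G-v$ with no edges between the parts, so $N(v)$ would be disconnected, contradicting local connectedness. Hence $G$ has a cycle, and moreover has minimum degree at least $2$. Second, I would note the two consequences of claw-freeness that drive everything: for every vertex $u$ the neighborhood $N(u)$ has independence number at most $2$ (three pairwise non-adjacent neighbors would form an induced $K_{1,3}$), and combined with local connectedness, $N(u)$ induces a connected graph with $\alpha\le 2$; in particular every edge of $G$ lies in a triangle.

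Then let $C=c_1c_2\cdots c_mc_1$ be a longest cycle, fix an orientation and write $c^+,c^-$ for successor and predecessor, and suppose for contradiction that $C$ is not Hamiltonian. Since $G$ is connected there is a vertex $x\notin V(C)$ adjacent to some $c_i\in V(C)$. The basic extension step is this: if $x\sim c_i^+$ or $x\sim c_i^-$, then inserting $x$ between $c_i$ and that neighbor yields a cycle of length $m+1$, contradicting maximality; so $x$ is adjacent to neither. Applying the independence bound to the three neighbors $x,c_i^-,c_i^+$ of $c_i$ then forces the chord $c_i^-\sim c_i^+$.

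Next I would exploit the connectedness of $N(c_i)$. Writing $S=N(c_i)\cap V(C)$ and $T=N(c_i)\setminus V(C)$, we have $c_i^\pm\in S$ and $x\in T$, so a path in $N(c_i)$ from $x$ to $S$ produces an edge $uw$ with $u\in T$ off the cycle and $w=c_j\in S$ on the cycle; thus $u$ is adjacent to both cycle vertices $c_i$ and $c_j$. If $c_j\in\{c_i^-,c_i^+\}$, then $u$ is adjacent to two consecutive vertices of $C$ and can be inserted, again contradicting maximality.

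The hard part is the remaining configuration, where every such attaching vertex $c_j$ is non-adjacent to $c_i$ along $C$ — the \emph{double-chord} case — because replacing one cycle-arc between $c_i$ and $c_j$ by the detour $c_iuc_j$ never lengthens $C$. To resolve it I would argue extremally: among all longest cycles, all off-cycle neighbors, and all choices of the attaching edge $uw$, choose one minimizing the cyclic distance from $c_i$ to $c_j$, and use the chord $c_i^-c_i^+$ to bypass $c_i$, obtaining a cycle $C_1$ of length $m-1$ on which $c_i$ is an off-cycle vertex adjacent to the two consecutive vertices $c_i^-,c_i^+$ and also to $x\notin V(C_1)$. The pair $x,c_i$ then behaves like a short off-cycle path waiting to be inserted, and I expect the contradiction to come from showing that the adjacencies forced by claw-freeness at $c_i^+,c_i^-$ and at the attachment vertices propagate around $C_1$ until some vertex off $C_1$ becomes adjacent to two consecutive vertices of $C_1$ while the bridge $xc_i$ links them, producing a cycle of length $m+1$. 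This propagation-and-rerouting step — turning the local chord structure into a genuine lengthening — is the main technical obstacle, and carrying it out cleanly requires a careful case analysis of how the arcs of $C$ are traversed; everything else is routine given the two preliminaries above.
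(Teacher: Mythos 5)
This statement is quoted in the paper from Oberly and Sumner's 1979 article; the paper itself gives no proof of it, so your attempt has to stand entirely on its own, and as written it does not. Your preliminaries are fine and standard: $2$-connectedness from local connectedness, $\alpha(N(u))\le 2$ from claw-freeness, the forced chord $c_i^-c_i^+$ when an off-cycle vertex $x$ attaches to $c_i$ on a longest cycle, and the use of connectedness of $N(c_i)$ to obtain an off-cycle vertex $u$ adjacent to $c_i$ and to some other cycle vertex $c_j$. But all of this only brings you to the genuinely hard configuration, the one you label the double-chord case, and there your argument stops being a proof: you choose an extremal attachment, pass to the $(m-1)$-cycle $C_1$ obtained by shortcutting through the chord $c_i^-c_i^+$, and then state that you ``expect'' claw-freeness to propagate adjacencies around $C_1$ until a vertex can be inserted and a cycle of length $m+1$ emerges. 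No mechanism for this is given. In particular, after the shortcut you have two off-cycle vertices $x$ and $c_i$ with $x$ adjacent only to $c_i$ among $\{c_i,c_i^-,c_i^+\}$, and nothing you have established lets you reinsert both of them; producing an $(m+1)$-cycle from $C_1$ would require $x$ (or the path $xc_i$) to attach to consecutive vertices of $C_1$, which is exactly what you have not shown. This missing step is not a routine verification: it is the substance of Oberly and Sumner's theorem, and their published argument handles it through a careful analysis of the neighbors of the attachment vertices along the cycle (successor/predecessor adjacencies forced by claw-freeness at $c_j$, $u$, and $c_i$), not through the bypass-and-propagate scheme you sketch. As it stands, the proposal is an honest reduction of the theorem to its hardest case, followed by a conjecture that the case can be resolved; that is a genuine gap, not a complete proof.
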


Other properties of claw-free graphs were found in
\cite{asratian96,asratian98,clark,faudree97,kaiser,matt,ryj97,wej}.
The results on locally connected graphs can be formulated in terms of balls.
For a vertex~$u$ of a graph~$G$, the \emph{ball of radius~$r$ centered at~$u$} is the subgraph of~$G$ induced by the set
$M_r(u)$ of vertices at distance at most~$r$ from~$u$.
Clearly, $G$~is locally connected if and only if every ball of radius~1 in~$G$ is 2-connected.

Some Hamiltonian properties of a graph have been obtained
using the structure of balls of radius~2 (see, for example, \cite{hasratian90,asratian98,asratian06,asratian18a,asratian18b,asratyan85,lai,ryj,veldman96}).
In particular, Asratian~\cite{asratian06} obtained the following result:

\begin{theorem}[Asratian~\cite{asratian06}]
\label{asratian06}
Let $G$ be a connected $k$-regular graph where
$k\geq {|M_2(u)|-1\over 2}$ for every vertex $u\in V(G)$ and every ball of radius two in~$G$ is 2-connected.
Then $G$ is Hamiltonian.
\end{theorem}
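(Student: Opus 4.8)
The plan is to localise the proof of Nash-Williams' theorem (quoted in the introduction), treating the hypothesis $k\ge(|M_2(u)|-1)/2$, i.e.\ $|M_2(u)|\le 2k+1$, as a local replacement for the global bound $k\ge(|V(G)|-1)/2$, and using the $2$-connectivity of balls to confine the whole argument to a single ball of radius~$2$. Two preliminary reductions come first. A global cut vertex $c$ would also be a cut vertex of the ball of radius~$2$ centred at $c$, since two neighbours of $c$ lying in distinct components of $G-c$ cannot be joined inside $M_2(c)$ without using $c$; as every such ball is $2$-connected, this is impossible, so $G$ is $2$-connected and in particular has a cycle. I take a longest cycle $C$, assume for contradiction that it is not Hamiltonian, fix an orientation, and write $v^+,v^-$ for the successor and predecessor of a vertex $v\in V(C)$.

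Next, pick a component $H$ of $G-V(C)$ and let $R=\{v\in V(C): v$ has a neighbour in $H\}$ be its (nonempty) attachment set. The standard longest-cycle lemmas then apply: no two vertices of $R$ are consecutive on $C$ (else $H$ could be inserted to lengthen $C$); the successor set $R^+=\{v^+:v\in R\}$ is independent and no vertex of $R^+$ has a neighbour in $H$ (else a crossing yields a longer cycle); and symmetrically for $R^-=\{v^-:v\in R\}$.

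I would first dispose of the case $H=\{x\}$, so that $N(x)=R$ has exactly $k$ vertices. The sets $\{x\}$, $N(x)$ and $R^+$ are then pairwise disjoint subsets of $M_2(x)$, the vertices of $R^+$ being at distance exactly $2$ from $x$, so $|M_2(x)|\ge 1+k+k=2k+1$. The hypothesis forces equality, whence $R^-=R^+$, the ball at $x$ is exactly $\{x\}\cup R\cup R^+$, and $R$ and $R^+$ alternate around $C$ (so $|V(C)|=2k$). From this rigid configuration I extract a contradiction through the degrees. Every $a\in R$ has $N(a)\subseteq M_2(x)=\{x\}\cup R\cup R^+$; if moreover every $w\in R^+$ had all its neighbours inside $M_2(x)$, then, since $w\not\sim x$ and $R^+$ is independent, $N(w)\subseteq R$ and hence $N(w)=R$, so $R$ would be completely joined to the $(k+1)$-set $R^+\cup\{x\}$, forcing a vertex of $R$ to have degree exceeding $k$. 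Thus some $w\in R^+$ has a neighbour outside $M_2(x)$; examining the ball of radius~$2$ centred at this $w$, the forced external neighbour together with $w^\pm\in R$ and the vertex $x$ (at distance $2$ from $w$) enlarges $M_2(w)$ beyond $2k+1$, contradicting the hypothesis at $w$.

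The main obstacle is the general case $|V(H)|\ge 2$, where $x\in H$ has neighbours off $C$ and the attachment vertices of $H$ need not share a common ball, so the clean count above is unavailable. Here I would run the Nash-Williams degree counting in localised form: choosing $x\in H$ with a neighbour $a\in R$ and working inside the ball centred at $a$ (which contains all of $N(x)$, since $x\sim a$), I would use the independence of $R^+$ and the absence of $H$-edges from $R^+$ to bound a suitable degree sum by $|M_2(a)|-1\le 2k$, matching $\deg(x)+\deg(w)=2k$ for a successor $w$. The delicate feature is that this Ore-type bound falls short by exactly one of what Hamiltonicity usually demands, so the near-equality configurations must again be ruled out by relocating the analysis to a ball centred at a successor vertex whose second neighbourhood then becomes too large; the $2$-connectivity of balls is precisely what supplies the two internally disjoint paths needed to turn the resulting local adjacencies into a cycle longer than $C$. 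I expect this localisation of the extremal (equality) analysis, rather than any single inequality, to be the technical heart of the proof.
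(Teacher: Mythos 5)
A preliminary remark: the paper never proves \cref{asratian06} — it is quoted from Asratian's 2006 paper, and the present text only establishes (\cref{lemma1}) that its degree hypothesis is equivalent to membership in $\mathcal{G}(k)$, giving the reformulation \cref{asratian21}. So your attempt must stand on its own, and it has a concrete gap already in the one case you claim to finish completely. In the singleton case $H=\{x\}$, your closing step — that a vertex $w\in R^+$ with a neighbour $y$ outside $M_2(x)$ must have $|M_2(w)|>2k+1$ — does not follow. The vertices you invoke ($y$, $w^-$, $w^+$) are all \emph{neighbours} of $w$, so they live inside $N(w)$, which has exactly $k$ elements no matter what; only $x$ is guaranteed to lie in $N_2(w)$, and nothing prevents $N_2(w)$ from having at most $k$ vertices. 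A concrete witness is the $3$-cube $Q_3$ with $k=3$: take $C$ to be the $6$-cycle avoiding two antipodal vertices $x$ and $y$. Then $H=\{x\}$ is a singleton component, $R$ and $R^+$ alternate on $C$, $|M_2(x)|=2k+1=7$, every $w\in R^+$ has the external neighbour $y$, and yet $|M_2(w)|=7=2k+1$ for every such $w$. Of course $Q_3$ satisfies the hypotheses and is Hamiltonian — the point is that in this configuration $C$ is simply not a longest cycle, so the contradiction you need must come from actually exhibiting a longer cycle through $V(C)$ (using $y$, its attachment set, and the $2$-connectedness of balls), not from a cardinality violation at $w$. Your purely local count at $w$ cannot close even this easiest case.

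The second, larger gap is that the case $|V(H)|\geq 2$ is a plan rather than a proof: you say you ``would run'' a localised Nash--Williams count in the ball centred at an attachment vertex and that the near-equality configurations ``must again be ruled out,'' and you yourself flag this extremal analysis as the technical heart. That deferred part is precisely where the work lies: one has to decide at which middle vertex of an induced path $uwv$ to apply the inequality of \cref{lemma1}(iv), and then use $2$-connectedness of the relevant ball of radius~$2$ to convert the forced adjacencies into a cycle longer than $C$ — compare how \cref{triangle} and the counting over $W^+$ are deployed in the proof of \cref{like:bondy}, which is the closest model in this paper for such an argument. As submitted, neither case of your proof is complete, and the singleton case contains a step that is outright false.
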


\cref{asratian06} is a generalization of the result of Nash-Williams~\cite{nash-williams71}
since a connected $k$-regular graph~$G$ with $k\geq {|V(G)|-1\over 2}$ is 2-connected and any
ball of radius~2 in~$G$ is the graph~$G$ itself.

Let $\mathcal{G}(k)$ denote the set of
connected $k$-regular graphs~$G$
where the number of vertices at distance~$2$ from
any vertex in~$G$ does not exceed~$k$.
Then one can show (see \cref{lemma1}) that \cref{asratian06} can be reformulated as follows%
\footnote{This formulation will be more convenient for our further descriptions and proofs.}:

\begin{theorem}
\label{asratian21}
Let $G$ be a graph in $\mathcal{G}(k)$ such that every ball of radius two in~$G$ is 2-connected.
Then $G$ is Hamiltonian.
\end{theorem}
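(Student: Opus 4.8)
The plan is to obtain this statement as a direct consequence of \cref{asratian06}. Both theorems already share the hypothesis that every ball of radius two in $G$ is 2-connected, so the only thing that needs checking is that the membership condition $G\in\mathcal{G}(k)$ coincides with the degree inequality $k\geq(|M_2(u)|-1)/2$ demanded by \cref{asratian06}. Once this equivalence is in hand, the hypotheses of the two theorems are literally the same and the conclusion is immediate; this is presumably the content of the lemma cited as \cref{lemma1}.

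First I would fix an arbitrary vertex $u$ of a connected $k$-regular graph $G$ and sort the ball $M_2(u)$ by distance from $u$. Writing $N(u)$ for the neighborhood of $u$ and $N_2(u)$ for the set of vertices at distance exactly $2$ from $u$, we have the disjoint decomposition $M_2(u)=\{u\}\cup N(u)\cup N_2(u)$, and since $|N(u)|=k$ by regularity this gives the exact count $|M_2(u)|=1+k+|N_2(u)|$. This single identity is the heart of the reformulation.

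Next I would unwind the degree inequality. Clearing the denominator, $k\geq(|M_2(u)|-1)/2$ is equivalent to $2k\geq|M_2(u)|-1$, and substituting the count above turns this into $2k\geq k+|N_2(u)|$, that is, $k\geq|N_2(u)|$. Hence the inequality of \cref{asratian06} holds at every vertex precisely when $|N_2(u)|\leq k$ at every vertex, which is exactly the defining property of the class $\mathcal{G}(k)$. Consequently any $G\in\mathcal{G}(k)$ all of whose balls of radius two are 2-connected satisfies the hypotheses of \cref{asratian06} and is therefore Hamiltonian.

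I do not anticipate a genuine obstacle, since the argument is nothing more than a vertex count read off from the definitions. The only point requiring a little care is to confirm that $\{u\}$, $N(u)$, and $N_2(u)$ are pairwise disjoint and together exhaust $M_2(u)$; but this is immediate, as these are exactly the vertices of $M_2(u)$ lying at distances $0$, $1$, and $2$ from $u$, respectively.
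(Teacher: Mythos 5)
Your proposal is correct and follows exactly the paper's route: the paper derives \cref{asratian21} from \cref{asratian06} via the equivalence (i)$\Leftrightarrow$(ii) of \cref{lemma1}, whose proof is precisely your count $|M_2(u)|=1+k+|N_2(u)|$ showing that $k\geq(|M_2(u)|-1)/2$ is the same as $|N_2(u)|\leq k$.
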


The starting point of our present research was the following question:
Is it possible to omit the condition on 2-connectedness of balls of radius~2 in \cref{asratian21}
without losing the Hamiltonicity of~$G$? This question was motivated by the fact that the set $\mathcal{G}(k)$,
$k\geq 3$, contains infinitely many Hamiltonian graphs where no ball of radius~2 is 2-connected
(see \cref{claw-free1,claw-free2} in \cref{sec:defs}).

In the present paper we find new classes of Hamiltonian graphs
not satisfying known conditions for Hamiltonicity, e.g. the conditions of \cref{kikust75,oldthm:oberly}.
We prove that all graphs in the sets $\mathcal{G}(3)$, $\mathcal{G}(4)$ and $\mathcal{G}(5)$
are Hamiltonian,
that is, the condition of 2-connectedness of balls of radius~2 in {\cref{asratian21}} can be omitted if $k\leq 5$.
Moreover we characterize all graphs in
$\mathcal{G}(3)$, $\mathcal{G}(4)$ and $\mathcal{G}(5)$
where not all balls of radius~2 are 2-connected.
Our results on 3- and 4-regular graphs
imply \cref{pippert74}, and
our result on 5-regular graphs (\cref{five:regular}) and Kikust's theorem (\cref{kikust75}) are incomparable to each other in the sense that neither theorem implies the other.

We also show that in contrast with the sets $\mathcal{G}(3)$, $\mathcal{G}(4)$ and $\mathcal{G}(5)$,
the set $\mathcal{G}(k)$
contains non-Hamiltonian graphs for any $k\geq 6$.
Furthermore, we prove that the problem of determining whether there exists a Hamilton cycle in a graph from $\mathcal{G}(6)$ is NP-complete.
Despite this and \cref{irzhav},
we show that
every locally connected graph $G\in \mathcal{G}(k)$, $k\geq 6$, is Hamiltonian and
for any $k\ge 30$ the set $\mathcal{G}(k)$ contains
infinitely many locally connected graphs not satisfying the conditions of \cref{oldthm:oberly}.
We also prove that
if $C$ is a non-Hamiltonian cycle in a locally connected graph $G\in \mathcal{G}(k)$
then there exists a cycle~$C'$ of length~$\card{V(G)}+\ell$ in~$G$, $\ell\in\set{1,2}$,
such that $V(C)\subset V(C')$.
The paper is concluded with a conjecture.


\section{Definitions and preliminary results}
\label{sec:defs}

The distance between vertices $u$ and~$v$ in~$G$
is denoted by $d_G(u,v)$ or simply $d(u,v)$.
The greatest distance between any two vertices in
a connected graph~$G$ is the \emph{diameter} of~$G$, denoted by $\diam(G)$.
For each vertex $u\in V(G)$ and integer $r\ge1$ we denote by $N_r(u)$ and $M_r(u)$
the set of all vertices $v\in V(G)$ with $d(u,v)=r$ and $d(u,v)\leq r$, respectively.
The set $N_1(u)$ is called the {\em neighborhood} of~$u$ and usually is denoted by $N(u)$. The number of vertices in $N(u)$ is the degree of~$u$, denoted by $d(u)$. The set $N_2(u)$ is called the {\em second neighborhood} of~$u$.
The \emph{ball of radius~$r$ centered at~$u$}, denoted by $G_r(u)$, is the subgraph of~$G$ induced by the set $M_r(u)$.

Let~$C$ be a cycle of a graph~$G$.
We denote by $\overrightarrow C$ the cycle~$C$ with a given orientation,
and by $\overleftarrow C$ the cycle~$C$ with the reverse orientation.
If $u,v\in V(C)$ then $u\overrightarrow Cv$ denotes the consecutive vertices of~$C$
from $u$ to~$v$ in the direction specified by $\overrightarrow C$.
The same vertices in reverse order are given by $v\overleftarrow Cu$.
We use $u^+$ to denote the successor of~$u$ on $\overrightarrow C$
and $u^-$ to denote its predecessor.
Analogous notation is used with respect to paths instead of cycles.

A graph is called {\em complete $r$-partite} ($r\geq 2$)
if its vertices can be partitioned into $r$~nonempty independent sets $V_1,\ldots,V_r$
such that two vertices are adjacent if and only if
they do not belong to the same set~$V_i$ for any~$i\in\set{1,\dotsc,r}$.
A complete $r$-partite graph
with independent sets $V_1,\ldots,V_r$ of sizes
$n_{1},\ldots,n_{r}$ is denoted by~$K_{n_{1},\ldots,n_{r}}$.

\begin{lemma}
\label{lemma1}
The following properties are equivalent for a connected $k$-regular graph~$G$, $k\geq 3$:
\begin{enumerate}
\item $G\in \mathcal{G}(k)$.
\item $k\geq {|M_2(u)|-1\over 2}$ for every vertex~$u$ of~$G$.
\item $d(u)+d(v)\geq |M_2(w)|-1$ for any induced path $uwv$ in~$G$.
\item $|N(u)\cap N(v)|\geq \card[\big]{M_2(w)\setminus \bigl(N(u)\cup N(v)\bigr)}-1$ for any induced path $uwv$.
\end{enumerate}
\end{lemma}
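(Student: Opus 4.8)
The plan is to establish the chain of equivalences (i) $\Leftrightarrow$ (ii) $\Leftrightarrow$ (iii) $\Leftrightarrow$ (iv), from which all four statements become mutually equivalent by transitivity. The single identity underlying everything is that $k$-regularity gives the exact count $|M_2(u)| = 1 + k + |N_2(u)|$ for every vertex $u$, since $M_2(u)$ is the disjoint union of $\set{u}$, the $k$-element neighborhood $N(u)=N_1(u)$, and the second neighborhood $N_2(u)$.

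First I would dispatch (i) $\Leftrightarrow$ (ii) by pure arithmetic: substituting the identity above, the defining condition $|N_2(u)| \le k$ of $G\in\mathcal{G}(k)$ rearranges directly to $2k \ge |M_2(u)| - 1$, which is exactly (ii), and conversely. Next, for (iii) $\Leftrightarrow$ (iv) I would argue per induced path. Fix an induced path $uwv$; then $u,v\in N(w)$, so every neighbor of $u$ and every neighbor of $v$ lies within distance $2$ of $w$, giving $N(u)\cup N(v)\subseteq M_2(w)$. Hence $|M_2(w)\setminus(N(u)\cup N(v))| = |M_2(w)| - |N(u)\cup N(v)|$, while inclusion--exclusion yields $|N(u)\cap N(v)| + |N(u)\cup N(v)| = d(u)+d(v)$. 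Substituting both identities transforms the inequality in (iv) into $d(u)+d(v) \ge |M_2(w)|-1$, which is precisely (iii) for that path; since both statements quantify over all induced paths, they are equivalent. Note that this step is purely set-theoretic and does not even use regularity of $u$ or $v$.

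Finally I would treat (ii) $\Leftrightarrow$ (iii). For a vertex $w$ admitting two nonadjacent neighbors $u,v$, regularity gives $d(u)+d(v)=2k$, so (iii) applied to the path $uwv$ reads $2k \ge |M_2(w)|-1 = k + |N_2(w)|$, i.e.\ $|N_2(w)|\le k$, independently of the choice of $u,v$. Thus (ii) $\Rightarrow$ (iii) is immediate. The one point requiring care --- and the main (small) obstacle --- is the converse for a vertex $w$ whose neighborhood $N(w)$ is complete, so that $w$ is the middle of no induced path and (iii) imposes no constraint at $w$. Here I would observe that $\set{w}\cup N(w)$ is then a clique of order $k+1$ in which every vertex already has all $k$ of its neighbors inside the set, forcing $\set{w}\cup N(w)$ to be a connected component; since $G$ is connected this means $G=K_{k+1}$, whence $N_2(w)=\emptyset$ and $|N_2(w)|\le k$ holds trivially. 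Combining the two cases gives $|N_2(w)|\le k$ for every vertex $w$, which is (ii). I expect only this clique argument to need a structural remark rather than bookkeeping; everything else reduces to the counting identity and inclusion--exclusion.
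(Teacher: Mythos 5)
Your proof is correct and follows essentially the same route as the paper: the identity $|M_2(u)|=1+k+|N_2(u)|$ for (i)$\Leftrightarrow$(ii), regularity $d(u)=d(v)=k$ for (ii)$\Leftrightarrow$(iii), and the inclusion--exclusion identity $d(u)+d(v)=|N(u)\cap N(v)|+|N(u)\cup N(v)|$ together with $N(u)\cup N(v)\subseteq M_2(w)$ for (iii)$\Leftrightarrow$(iv). Your only addition is the explicit treatment of a vertex whose neighborhood is a clique (forcing $G=K_{k+1}$ by regularity and connectedness), a detail the paper subsumes under ``Evidently''; it is a correct and welcome clarification but not a different approach.
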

\begin{proof}
Let $G$ be a connected $k$-regular graph.
For any $u\in V(G)$, $|M_2(u)|=1+k+|N_2(u)|$. Therefore the condition
$k\geq {|M_2(u)|-1\over 2}$ is equivalent to the condition $|N_2(u)|\leq k$
which means that $G\in\mathcal{G}(k)$. Thus (i) is equivalent to (ii).
Evidently, (ii) is equivalent to (iii) because $d(u)=k=d(v)$.
Finally, (iii) is equivalent to (iv), since $d(u)+d(v)=|N(u)\cap N(v)|+|N(u)\cup N(v)|$.
\end{proof}

\begin{lemma}
\label{lem:cut-vertex-structure}
Let $G$ be a graph in~$\mathcal{G}(k)$.
If $v$ is a cut vertex of a ball of radius~2 in~$G$,
then $v$ is a cut vertex of the ball~$G_2(v)$,
the subgraph $G_2(v)-v$ has exactly two components with $k$~vertices each,
and every neighbor of~$v$ is adjacent to
all other vertices of the component of $G_2(v)-v$ it lies in.
\end{lemma}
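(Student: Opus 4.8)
The plan is to reduce everything to a single counting \emph{engine} and then to feed it the right pair of neighbours of~$v$. Suppose $v$ has two non-adjacent neighbours $p,q$ whose only common neighbour is~$v$, that is $N(p)\cap N(q)=\set{v}$. Every neighbour of a neighbour of~$v$ lies within distance~$2$ of~$v$, so $N(p)\cup N(q)\subseteq M_2(v)$ and $\card{N(p)\cup N(q)}=2k-1$; moreover neither $p$ nor~$q$ belongs to $N(p)\cup N(q)$, so $\set{p,q}\subseteq M_2(v)\setminus\bigl(N(p)\cup N(q)\bigr)$. Hence $\card{M_2(v)}\ge(2k-1)+2=2k+1$, while $G\in\mathcal{G}(k)$ gives $\card{N_2(v)}\le k$, i.e.\ $\card{M_2(v)}\le 2k+1$. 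Equality must therefore hold throughout: $\card{N_2(v)}=k$ and $M_2(v)\setminus\bigl(N(p)\cup N(q)\bigr)=\set{p,q}$. Writing $C_p=\set{p}\cup\bigl(N(p)\setminus\set{v}\bigr)$ and $C_q=\set{q}\cup\bigl(N(q)\setminus\set{v}\bigr)$, this says that $C_p$ and $C_q$ partition $M_2(v)\setminus\set{v}$ into two parts of size exactly~$k$, with $p$ adjacent to all of $C_p\setminus\set{p}$ and $q$ adjacent to all of $C_q\setminus\set{q}$ — the skeleton of the claimed conclusion.

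The work is then to produce such a pair and to show there are no edges between $C_p$ and~$C_q$. Balls of radius~$2$ are connected, so a cut vertex~$v$ of $G_2(w)$ splits $G_2(w)-v$ into at least two components, each containing a neighbour of~$v$. If $w=v$, the separation already lives in $G_2(v)$ and any two neighbours $p,q$ of~$v$ in distinct components satisfy $N(p)\cap N(q)=\set{v}$, since a further common neighbour would lie in $M_2(v)$ and reconnect the components. The delicate case is $w\neq v$, and here I expect the main obstacle: common neighbours of $p$ and~$q$ always lie in $M_2(v)$ but need \emph{not} lie in $M_2(w)$, so the separation inside $G_2(w)$ does not obviously restrict to $G_2(v)$. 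To handle this I would use a \emph{gateway} argument. Let $S_1$ be the component of $G_2(w)-v$ containing~$w$ and $S_2$ another one. Each $s\in S_2$ has $d(w,s)=2$ (distance~$1$ would give an edge from $S_2$ to $w\in S_1$), and the internal vertex of a shortest $w$--$s$ path lies in $M_2(w)$ and joins $S_1$ to~$S_2$, hence equals the cut vertex; thus $s\sim v$ and $v\sim w$. This simultaneously rules out $d(w,v)=2$ and shows $S_2\subseteq N(v)$. I would then set $p:=w$ and pick any $q\in S_2$: these are non-adjacent neighbours of~$v$, and a common neighbour $z\neq v$ would satisfy $z\sim w$ (so $z\in M_2(w)$) and $z\sim q\in S_2$, forcing $z\in S_2$ and an illegal edge $zw$ between $S_1$ and~$S_2$. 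So $N(p)\cap N(q)=\set{v}$ and the engine applies.

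Ruling out edges between $C_p$ and~$C_q$ then upgrades the partition into a genuine separation of $G_2(v)-v$. For $w=v$ this is automatic, since $C_p$ and $C_q$ lie in the distinct components of $p$ and~$q$ and already exhaust $M_2(v)\setminus\set{v}$. For $w\neq v$ the choice $p=w$ keeps $C_p\subseteq\set{w}\cup N(w)$, so an edge $ab$ with $a\in C_p$ and $b\in C_q$ would force $b\in M_2(w)$; tracing adjacencies places $a\in S_1$ and $b\in S_2$, contradicting the separation of $G_2(w)$. In both cases $C_p$ and $C_q$ are joined by no edge, and each is connected because $p$ (respectively~$q$) is complete to it; hence they are exactly the two components of $G_2(v)-v$, each of order~$k$, and in particular $v$ is a cut vertex of $G_2(v)$. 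The final adjacency statement is then a degree count: a neighbour~$x$ of~$v$ has all its neighbours in $M_2(v)$ and, inside $G_2(v)-v$, in its own component; since $x$ has degree~$k$ with one edge spent on~$v$, the remaining $k-1$ edges must reach all $k-1$ other vertices of that component.
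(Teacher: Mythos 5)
Your proof is correct, and it is organized rather differently from the paper's. Both arguments run on the same fuel, namely $\lvert M_2(v)\rvert\le 2k+1$, but the paper splits into two cases with separate mechanisms: when $v$ is a cut vertex of its own ball it counts directly (at least two components, at most $2k$ vertices, each neighbour of $v$ having $k-1$ neighbours inside), and when $v$ is a cut vertex of $G_2(u)$ with $u\ne v$ it states $uv\in E(G)$ without proof and then unfolds the second neighbourhood of $v$ vertex by vertex, introducing the sets $\{u_1,\dots,u_p\}$, $\{x_1,\dots,x_{k-p}\}$, $\{w_1,\dots,w_{k-p}\}$, $\{y_1,\dots,y_p\}$, pinning down all their adjacencies, and finally showing $w_iy_j\notin E(G)$, so that $G_2(v)-v$ is disconnected and the first case takes over. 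You instead factor out a single counting engine --- non-adjacent $p,q\in N(v)$ with $N(p)\cap N(q)=\{v\}$ force $\lvert M_2(v)\rvert=2k+1$ and the partition of $M_2(v)\setminus\{v\}$ into $C_p\cup C_q$ of size $k$ each --- and feed both cases into it; your gateway argument (every vertex of a second component $S_2$ of $G_2(w)-v$ lies at distance $2$ from $w$ with $v$ as the only possible middle vertex) simultaneously yields $v\in N(w)$, thereby supplying the justification the paper omits, and produces the pair $p=w$, $q\in S_2$, while the separation of $G_2(w)$ disposes of edges between $C_p$ and $C_q$; the final adjacency claim is then a clean degree count. What your route buys is a shorter, uniform treatment of the harder case together with an explicit proof of $uv\in E(G)$; what the paper's longer analysis buys is the explicit local picture (complete graphs glued along the cut vertices) that foreshadows the structures $H(k,n)$, $\mathcal{F}(4,n)$ and $G(5,n)$ used in the subsequent theorems, although the lemma statement itself does not require that level of detail.
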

\begin{proof}
If $v$~is a cut vertex of the ball~$G_2(v)$,
then the subgraph $G_2(v)-v$ has at least two components, at most $2k$ vertices,
and each neighbor of~$v$ has $k-1$ neighbors in this subgraph.
This is only possible if $G_2(v)-v$ has exactly two components with $k$~vertices each
and each neighbor of~$v$ is adjacent to
all other vertices of the component of $G_2(v)-v$ it lies in.

Now suppose that $v$~is a cut vertex of the ball~$G_2(u)$
for some vertex~$u\ne v$.
Then $uv\in E(G)$.
Let $F$ denote the component of $G_2(u)-v$ where $u$~lies,
and let $N(v)\cap V(F)=\{u_1,\dotsc,u_p\}$, where $u=u_1$ and $p<k$.
Since $G$~is $k$-regular,
$v$~has $k-p$ neighbors that do not lie in~$F$,
we will call these vertices $x_1,\dotsc,x_{k-p}$.
Furthermore, since $G$~is $k$-regular,
$u$~has at least $k-p$~neighbors in $G_2(u)-M_1(v)$,
and $x_1$ has at least $p$~neighbors outside~$G_2(u)$;
all of these vertices are at distance~2 from~$v$.
Since there are at most $k$~vertices at distance~2 from~$v$,
this means that $u$~is adjacent to the vertices~$v,u_2,\dotsc,u_p$
and exactly $k-p$~other vertices which we will call~$w_1,\dotsc,w_{k-p}$,
and the vertex~$x_1$ is adjacent to exactly $p$~vertices outside~$G_2(u)$
which we will call~$y_1,\dotsc,y_{p}$. Moreover, it is clear that the vertices $x_1,\dotsc,x_{k-p}$ are
adjacent to each other and all of them are adjacent to the vertices~$y_1,\dotsc,y_p$.
Also, since we now know that all neighbors of $u_2,\dotsc,u_p$ lie in~$G_2(u)$,
it is easy to see that the vertices $u_2,\dotsc,u_p$ are all adjacent to each other
and to $w_1,\dotsc,w_{k-p}$.
It remains to show that $G_2(v)-v$ is not connected,
that is, that $w_iy_j\notin E(G)$ for all $i,j$.
But this follows from the fact that $N(w_i)$~is contained in~$G_2(u)$ for all~$i$,
and we know that $y_j$~does not lie in~$G_2(u)$ for all~$j$.
The \lcnamecref{lem:cut-vertex-structure} follows.
\end{proof}

\begin{lemma}
\label{triangle}
Let $G$ be a connected, locally connected graph
and $C$~a cycle in~$G$.
Then for every vertex $w\in V(G)$ that has a neighbour outside~$C$
there is a pair of vertices $v,z$ (depending on~$w$)
such that $v\in N(w)\setminus V(C)$, $z\in V(C)$, and $wz, vz\in E(G)$.
\end{lemma}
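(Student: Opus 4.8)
The plan is to use local connectedness at the single vertex~$w$. By hypothesis the subgraph $H=G[N(w)]$ induced by the neighbourhood of~$w$ is connected, and I would recover the required triangle $w,v,z$ as a single edge of~$H$ that joins a neighbour of~$w$ on~$C$ to a neighbour of~$w$ off~$C$. No global structure of~$G$ is needed beyond this; regularity and global connectedness play no role in the argument.

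First I would split $N(w)$ into the two sets $A=N(w)\cap V(C)$ and $B=N(w)\setminus V(C)$. The hypothesis that $w$ has a neighbour outside~$C$ gives $B\neq\emptyset$, and the fact that $w$ has a neighbour on~$C$ gives $A\neq\emptyset$; in particular, when $w$ lies on~$C$ its two cycle-neighbours already belong to~$A$. With both parts non-empty, I would invoke connectedness of~$H$: a connected graph cannot be the disjoint union of two non-empty induced subgraphs, so $H$ must contain an edge $vz$ with $v\in B$ and $z\in A$. This edge does the job, since then $v\in N(w)\setminus V(C)$, $z\in V(C)$, $wz\in E(G)$ (because $z\in N(w)$), and $vz\in E(G)$ (because $vz$ is an edge of~$H$). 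Taking this pair $v,z$ proves the \lcnamecref{triangle}.

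I expect no real difficulty once the set-up is in place: the whole statement reduces to the elementary observation that a connected graph split into two non-empty vertex classes has an edge across the cut. The single point I would be careful about is verifying that $A=N(w)\cap V(C)$ is non-empty, i.e.\ that $w$ genuinely has a neighbour on~$C$, since otherwise no vertex $z\in V(C)$ with $wz\in E(G)$ could exist at all. This is automatic in the intended application, where $w$ is a vertex of~$C$ and its two neighbours along the cycle supply such a~$z$.
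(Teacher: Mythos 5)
Your proof is correct and takes essentially the same route as the paper: the paper likewise uses connectedness of the neighbourhood (phrased as $2$-connectedness of the ball $G_1(w)$), takes a path in $G_1(w)-w$ from $w^+$ to a neighbour $u\notin V(C)$, and picks the last vertex of that path lying on~$C$ — which is just a concrete way of producing your edge across the cut between $N(w)\cap V(C)$ and $N(w)\setminus V(C)$. You are also right that the only point requiring care is $N(w)\cap V(C)\neq\emptyset$; the paper's proof assumes this implicitly by writing $w^+$, i.e.\ by taking $w\in V(C)$, which is exactly the situation in which the lemma is later applied.
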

\begin{proof}
Let $u$ be a vertex in $N(w)\setminus V(C)$. Since $G$ is locally connected, the ball $G_1(w)$ is 2-connected.
Then there is a path~$P$ in $G_1(w)-w$ with origin~$w^+$ and terminus~$u$.
Let $z$ be the vertex on $V(P)\cap V(C)$ such that
all other vertices on the path $z\overrightarrow Pu$ do not belong to~$C$,
and let $v$ denote the successor of~$z$ on~$\overrightarrow P$.
Then $v\in N(w)\setminus V(C)$, $z\in V(C)$ and $wz, vz\in E(G)$
\end{proof}

We also need the following result obtained in~\cite{asratian18a}.

\begin{proposition}[\cite{asratian18a}]
\label{proposition:ball}
If every ball of radius~$r$ in a graph~$G$ is $t$-connected, $t\geq 2$, then all balls of any radius~$r'>r$ in~$G$ are
$t$-connected, too.
\end{proposition}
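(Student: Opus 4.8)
The plan is to reduce the problem to a single increment of the radius, and then to prove that increment by covering the larger ball with $t$-connected balls of the smaller radius. First I would observe that it suffices to establish the following \emph{step}: if every ball of radius~$s$ in~$G$ is $t$-connected, then every ball of radius~$s+1$ is $t$-connected. The \lcnamecref{proposition:ball} for an arbitrary $r' > r$ then follows by induction on~$r'$, applying the step successively with $s = r, r+1, \dots, r'-1$.

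To prove the step for a fixed centre~$u$, the key observation is that
\[
  M_{s+1}(u) = \bigcup_{x \in N(u)} M_s(x),
\]
since a vertex at distance at most $s+1$ from~$u$ lies within distance~$s$ of some neighbour of~$u$ on a shortest path, while conversely $M_s(x) \subseteq M_{s+1}(u)$ because $d(u,x)=1$. Writing $H$ for the union of the balls $G_s(x)$, $x \in N(u)$, the graph $H$ is therefore a spanning subgraph of $G_{s+1}(u)$; since each $G_s(x)$ is $t$-connected by hypothesis and is contained in $G_{s+1}(u)$, it suffices to show that $H$ itself is $t$-connected, as adding edges cannot destroy $t$-connectivity. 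Here I would invoke the following elementary merging lemma: \emph{a union of $t$-connected subgraphs that all contain a common set~$W$ of at least~$t$ vertices is $t$-connected.} (Proof: for any $S$ with $\card{S} \le t-1$ each piece minus~$S$ is connected, and a vertex of $W \setminus S$, nonempty because $\card{W} \ge t > \card{S}$, lies in every piece, so their union minus~$S$ is connected.) For the common set I would take $W = M_1(u)$: every $x \in N(u)$ satisfies $M_1(u) \subseteq M_s(x)$ once $s \ge 2$, as each vertex of $M_1(u)$ is within distance~$2$ of~$x$, and $\card{M_1(u)} = 1 + d(u) \ge t+1$ because $t$-connectivity of $G_s(u)$ forces $d(u) \ge t$. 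This settles the step, and hence the whole \lcnamecref{proposition:ball}, whenever the starting radius satisfies $r \ge 2$ — the range actually used in this paper.

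The main obstacle is the single remaining instance of the step, namely the passage from radius~$1$ to radius~$2$, which is needed only when one starts from $r=1$. Here the argument above breaks down: for $s=1$ the guaranteed common intersection of the balls $G_1(x)$, $x\in N(u)$, is only $M_0(u) = \set{u}$, which has fewer than~$t$ vertices, so the merging lemma no longer applies and a separator may legitimately contain~$u$. To handle this case I would exploit the special structure of radius-$1$ balls: $G_1(v)$ is the join of~$v$ with the induced subgraph on $N(v)$, so the hypothesis that $G_1(v)$ is $t$-connected is equivalent to the subgraph induced by $N(v)$ being $(t-1)$-connected together with $d(v) \ge t$. For a separator~$S$ of $G_2(u)$ with $\card{S} \le t-1$ this already makes the core induced by $N(u) \setminus S$ connected — it is a $(t-1)$-connected graph with at most $t-2$ of its vertices deleted when $u \in S$, and it is $G_1(u)-S$ otherwise — and the remaining task is to attach each vertex of $N_2(u) \setminus S$ to this core. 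The delicate point, where the bulk of the case analysis lives, is exactly when every neighbour in $N(u)$ of such a vertex happens to lie in~$S$; there one must route through the intermediate balls and use their local $(t-1)$-connectivity to reach the core.
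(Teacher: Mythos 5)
The paper itself gives no proof of this \lcnamecref{proposition:ball} --- it is quoted from \cite{asratian18a} --- so there is nothing internal to compare against, and your proposal has to stand on its own. Its positive part is fine: the reduction to a single-radius step, the covering $M_{s+1}(u)=\bigcup_{x\in N(u)}M_s(x)$, the merging lemma (union of $t$-connected subgraphs sharing at least $t$ common vertices), and the choice $W=M_1(u)$ with $\card{M_1(u)}=1+d(u)\ge t+1$ are all correct, and together they do prove the statement whenever the starting radius satisfies $r\ge2$.

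The genuine gap is the step from radius $1$ to radius $2$, which you explicitly leave as a sketch. The \lcnamecref{proposition:ball} is stated for every $r\ge1$, and $r=1$ is not a case you may discard as ``not used'': it is precisely the case that connects the proposition to local connectedness (with $r=1$, $t=2$ it says that a locally connected graph of minimum degree at least~$2$ has all balls of radius~$2$ 2-connected, i.e.\ exactly the hypothesis of \cref{asratian21}), so a proof that excludes it proves a strictly weaker statement. Your last paragraph names the difficulty --- a vertex $z\in N_2(u)\setminus S$ all of whose neighbours in $N(u)$ lie in the separator~$S$ --- but gives no argument for it; ``route through the intermediate balls'' is the missing proof, not a proof. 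For the record, the routing can be done in a few lines: pick $a\in N(z)\cap N(u)\cap S$; since $G_1(a)$ is $t$-connected, $G[N(a)]$ is $(t-1)$-connected, so $G[N(a)]-(S\setminus\set{a})$ is connected (at most $t-2$ vertices are deleted) and contains~$z$; it also contains either $u$ itself (if $u\notin S$) or, if $u\in S$, some $c\in N(a)\cap(N(u)\setminus S)$, because $a$ has at least $t-1$ neighbours in $N(u)$ while $\card{S\setminus\set{a,u}}\le t-3$; and $N(a)\subseteq M_2(u)$, so this path lies in $G_2(u)-S$ and attaches $z$ to the connected core ($u$ together with $N(u)\setminus S$, respectively $G[N(u)]-S$, which is connected since $G[N(u)]$ is $(t-1)$-connected). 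Until an argument of this kind is actually written out, the proposal is incomplete.
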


Now we will show that for any $k\geq 3$ the set $\mathcal{G}(k)$ contains infinitely many Hamiltonian graphs.
First we describe in $\mathcal{G}(k)$ two infinite classes of Hamiltonian claw-free graphs where no ball of radius~2 is 2-connected.

\begin{example}
\label{claw-free1}
We define a graph $G(k,n)$, where $k\geq 3,n\geq 2$, as follows.
Its vertex set is $V_0\cup V_1\cup\dotsb\cup V_n$,
where $V_0,V_1,\dotsc,V_n$ are pairwise disjoint sets,
$V_0=\{v_1,\dots,v_n\}$ and $V_i=\{v_1^{i},\dots,v_k^i\}$, $i=1,\dots,n$.
The set~$V_i$ induces in $G(k,n)$ a complete graph, $i=1,\dots,n$.
For each $i=2,3,\dots,n$, the vertex~$v_i$ is adjacent to the first
$\left\lfloor \frac{k}{2}\right\rfloor$
vertices in~$V_i$ and the last $\left\lceil \frac{k}{2} \right\rceil$ vertices in~$V_{i-1}$.
Finally, $v_1$ is adjacent to the first $\left\lfloor \frac{k}{2}\right\rfloor$
vertices in~$V_{1}$ and the last $\left\lceil \frac{k}{2} \right\rceil$ vertices in~$V_{n}$.
\end{example}
\begin{example}
\label{claw-free2}
Now we define a graph $H(k,n)$, where $k\geq 3, n\geq 2$, as follows.
Its vertex set is $W_0\cup W_1\cup\dotsb\cup W_n$,
where $W_0,W_1,\dotsc,W_n$ are pairwise disjoint sets,
$W_0=\{u_1,\dots,u_{n},v_1,\dots,v_n\}$ and $|W_i|=k-1$, for $i=1,\dots,n$.
The set~$W_i$ induces in $H(k,n)$ a complete graph, $i=1,\dots,n$.
For each $i=1,\dots,n$, the vertices $u_i$ and~$v_{i}$ are adjacent to all
vertices in~$W_i$. Finally, $u_{n}v_1\in E(H(k,n))$ and $u_iv_{i+1}\in E(H(k,n))$, for $i=1,\dots,n-1$.
\end{example}

Clearly, all graphs in $\defset{G(k,n)}{n\geq 2}$ and $\defset{H(k,n)}{n\geq 2}$ are Hamiltonian and have $n(k+1)$ vertices,
connectivity~$2$ and diameter~$\floor[\big]{\tfrac{3n}2}$, $k=3,4,\dots$. Furthermore, they are not
locally connected, since the subgraph induced by the neighborhood $N(v_1)$ in each of these graphs is not connected.
Finally, the graph $G(3,n)$ is isomorphic to $H(3,n)$, for $n\geq 2$.
\medskip

Let us also note that for
every $k\geq 3$ the set $\mathcal{G}(k)$ contains Hamiltonian graphs
that are not claw-free.
First we mention graphs obtained from a complete bipartite graph $K_{k+1,k+1}$ by removing a perfect matching of $K_{k+1,k+1}$.
Next note that for every divisor $d\geq 3$ of the integer~$k$
a complete $r$-partite graph $K_{d,\dots,d}$, where $r=1+{k\over d}$,
is Hamiltonian and is not claw-free.
For example, the graphs $K_{3,3,3,3,3}, K_{4,4,4,4}, K_{6,6,6}$ and $K_{12,12}$
belong to the set $\mathcal{G}(12)$.
Also note that the required graphs can be obtained from a complete bipartite graph~$K_{k,k}$, $k\geq 4$, with
bipartition $(V_1,V_2)$ as follows: choose an integer $t<k/2$ and a matching $E(t)=\{x_1y_1,\dots,x_{2t}y_{2t}\}$ consisting of $2t$ edges, where
$x_1,\dots,x_{2t}\in V_1$, and $y_1,\dots,y_{2t}\in V_2$, and construct a graph
\[(G-E(t))+\{x_1x_2,y_1y_2,\dots,x_{2t-1}x_{2t},y_{2t-1}y_{2t}\}.\]
It is not difficult to verify that
such a graph is not claw-free and all its balls of radius~2 are 2-connected.
We have a stronger result in the case $k=4n$, $n\geq 1$ (see \cref{corollary:nonclaw,large-claw-free}).

\section{Hamiltonicity of graphs in \texorpdfstring{\boldmath$\mathcal{G}(k)$, $k\le5$\unboldmath}{𝓖(𝑘), 𝑘≤5}}

It is evident that all graphs in the set $\mathcal{G}(2)$ are Hamiltonian.

\begin{theorem}
\label{regular:three}
Every graph~$G$ in the set $\mathcal{G}(3)$ is Hamiltonian.
Moreover, either $G$~satisfies the conditions of \cref{asratian21}, or
$G$ is isomorphic to $H(3,n)$, for some $n\geq 2$.
\end{theorem}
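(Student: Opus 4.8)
The plan is to split into two cases according to whether every ball of radius~2 in~$G$ is 2-connected. If it is, then $G$ satisfies the hypotheses of \cref{asratian21} and is therefore Hamiltonian, landing in the first alternative. The whole content of the theorem is thus the second case: assuming that some ball of radius~2 fails to be 2-connected, I will show that $G$ is forced to be a cyclic chain of ``diamonds'' and hence isomorphic to $H(3,n)$ for some $n\ge2$ (whose Hamiltonicity is already noted in \cref{sec:defs}). Throughout I call an induced $K_4$ minus one edge a \emph{diamond}; its two degree-2 vertices are its \emph{corners} and its two degree-3 vertices its \emph{centres}. In a cubic graph a centre has all three of its neighbours inside its diamond, while a corner has exactly one neighbour outside it, which I call its \emph{external neighbour}.

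First I would extract a starting diamond. Since some ball of radius~2 is not 2-connected, \cref{lem:cut-vertex-structure} (with $k=3$) supplies a vertex~$v$ that is a cut vertex of~$G_2(v)$ such that $G_2(v)-v$ has exactly two components of three vertices each, with every neighbour of~$v$ joined to all other vertices of its own component. As $v$~has three neighbours and each component must contain at least one of them, the split is $1+2$; the component holding two neighbours $b_1,b_2$ of~$v$ together with its third vertex~$r$ then induces exactly the diamond $\{v,b_1,b_2,r\}$, with centres $b_1,b_2$ and corners $v,r$. In particular $v$~is a corner of a diamond, and the edge from~$v$ to its single neighbour in the other component is $v$'s external edge.

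The heart of the argument is a propagation claim: \emph{if $c$ is a corner of a diamond~$D$ with external neighbour~$x$, then $x$ is itself a corner of another diamond~$D'$, and the edge $cx$ is the external edge of~$x$ in~$D'$.} To prove it I would write $N(c)=\{m_1,m_2,x\}$ with $m_1,m_2$ the centres of~$D$, and observe that $m_1,m_2\in N_2(x)$. A short check, using that centres have all their neighbours inside~$D$, shows that $x$ is not adjacent to the other corner of~$D$, so the two remaining neighbours $p,q$ of~$x$ lie outside~$D$ and are non-adjacent to both $m_1$ and~$m_2$. Since $\card{N_2(x)}\le3$ while $m_1,m_2$ already fill two of these slots, at most one further vertex of~$N_2(x)$ is available to host all the neighbours of~$p$ and~$q$ other than~$x$; this is impossible unless $pq\in E(G)$ and $p,q$ share a common neighbour~$y\notin N(x)$, which forces the diamond $\{x,p,q,y\}$ with corners $x,y$. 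This case analysis—ruling out every configuration except the forced diamond—is where essentially all the work lies, and is the step I expect to be the main obstacle.

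Finally I would assemble the global structure. Starting from the diamond supplied by~$v$ and applying the propagation claim repeatedly at each corner, every external edge is seen to join two corners of distinct diamonds. Contracting each diamond to a point turns these external edges into a $2$-regular auxiliary graph, so the diamonds reachable from the start form a single cycle. The union of these diamonds together with their external edges is a subgraph in which every vertex already has degree~$3$; hence, since $G$ is connected and $3$-regular, this subgraph is all of~$G$, and $G$ is a cyclic chain of $n$ diamonds joined corner-to-corner, which is precisely $H(3,n)$. Here $n\ge2$, for $n=1$ would make the two corners of the single diamond adjacent, giving $G\cong K_4$, whose balls are all $2$-connected, contrary to our standing assumption. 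This completes the second case and the proof.
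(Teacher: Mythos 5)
Your proposal is correct and takes essentially the same route as the paper: the paper likewise splits on whether every ball of radius~2 is 2-connected, uses \cref{lem:cut-vertex-structure} to extract the initial diamond-shaped block around the cut vertex, and then propagates this structure from corner to corner until it closes into a cyclic chain of $n\ge2$ blocks isomorphic to $H(3,n)$. The only real difference is bookkeeping: the paper re-applies \cref{lem:cut-vertex-structure} at each new vertex $v_{i+1}$ (after observing $d(v_1,v_{i+1})>2$ makes it a cut vertex of its own ball), whereas you re-derive that propagation step by a direct $\card{N_2(x)}\le 3$ count, which does go through (including the short check that $x$ is not adjacent to the other corner), so the step you flagged as the main obstacle is not a gap.
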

\begin{proof}
Let $G$ be a graph in $\mathcal{G}(3)$.
If every ball of radius two in~$G$ is 2-connected
then $G$~satisfies the conditions of \cref{asratian21} and is thus Hamiltonian.

Suppose that not every ball of radius two in~$G$ is 2-connected,
and let $v_1$~be a cut vertex of a ball of radius~2 in~$G$.
Then by~\cref{lem:cut-vertex-structure},
$v_1$ is a cut vertex of the ball~$G_2(v_1)$,
the subgraph $G_2(v_1)-v_1$ has exactly two components with $3$~vertices each,
and each neighbor of~$v_1$ is adjacent to
all other vertices of the component of $G_2(v_1)-v_1$ it lies in.

Since $G$~is 3-regular,
one of the components of $G_2(v_1)-v_1$ will contain two neighbors of~$v_1$, denoted $w_{1}$ and~$w_{2}$,
and one other vertex which we call~$u_1$,
and the vertices $u_1$, $w_{1}$ and~$w_{2}$ will be adjacent to each other.
In~$G$, the vertex~$u_1$ will have one additional neighbor, which we call~$v_2$.
Clearly $d(v_1,v_2)>2$,
since $v_1$ is a cut vertex of $G_2(v_1)-v_1$.
But then $v_2$~is a cut vertex of the ball~$G_2(v_2)$,
and one of the components of $G_2(v_2)-v_2$ contains only one neighbor of~$v_2$, namely~$u_1$.
Let $F_1$ denote the subgraph induced by the set $\{v_1,u_1,v_2\}\cup W_1$, where $W_1=\{w_{1},w_{2}\}$.

Assume that we have already found in~$G$ $t\geq1$ isomorphic subgraphs $F_1,\dots,F_t$
and disjoint sets $W_0,W_1,\dots,W_t$ such that
\begin{itemize}
\item $W_0=\{u_1,\dots,u_t,v_1,\dots,v_t,v_{t+1}\}$ and $|W_1|=\dots=|W_t|$,
\item $V(F_i)=\{v_i,u_{i},v_{i+1}\}\cup W_i$, for $i=1,\dots,t$,
\item $v_i$ is a cut vertex of the ball $G_2(v_i)$, for $i=1,\dots,t+1$.
\end{itemize}

If $v_{t+1}\ne v_1$, then using for $v_{t+1}$ the same argument as for~$v_1$ we will find
a new subgraph~$F_{t+1}$
isomorphic to~$F_1$, where
$V(F_{t+1})=\{v_{t+1},u_{t+1},v_{t+2}\}\cup W_{t+1}$ and the vertex~$v_{t+2}$ is adjacent to~$u_{t+1}$,
$(W_{t+1}\cup\{u_{t+1}\})\cap \bigl(\bigcup_{i=1}^tV(F_i)\bigr)=\emptyset$ and
$v_{t+2}$ is a cut vertex of the ball $G_2(v_{t+2})$. Put $W_0\coloneqq W_0\cup \{u_{t+1},v_{t+2}\}$.

Since $G$~is finite, after finite number of steps we will find a vertex~$v_{n+1}$
such that $n\geq 2$ and $v_{n+1}=v_1$.
Then $\bigcup_{i=1}^nF_i=G$ and $G$ is isomorphic to~$H(3,n)$
and thus Hamiltonian.
The proof of the theorem is complete.
\end{proof}

Now we will show that all graphs in the set $\mathcal{G}(4)$ are Hamiltonian.
Moreover we will characterize all graphs in $\mathcal{G}(4)$ that do not satisfy
the conditions of \cref{asratian21}.

\begin{figure}
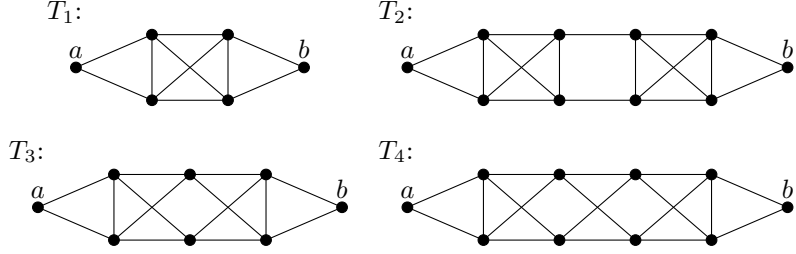

\centering
\figFparts
\caption{The graphs $T_1$, $T_2$, $T_3$, and $T_4$.}
\label{fig:4-reg-components}
\end{figure}

Let $F_1,\dots,F_n$, where $n\geq 2$, be a sequence of disjoint graphs where each~$F_i$ is isomorphic to one of the graphs in
\cref{fig:4-reg-components},
and let $u^1_i$ and $u^2_{i}$ be the vertices in~$F_i$ corresponding to the vertices $a$ and~$b$ in this isomorphism.
Consider a graph obtained from
$F_1,\dots,F_n$ by identifying the vertices $u^2_i$ and $u^1_{i+1}$ and denoting the obtained new vertex by~$u_{i+1}$, $i=1,\dots,n$, where
$u^1_{n+1}=u_{n+1}=u_1$. The set of all graphs obtaining in such a way we denote by $\mathcal{F}(4,n)$.
Clearly, $G(4,n)\in \mathcal{F}(4,n)$
and the diameter of any $G\in\mathcal{F}(4,n)$ satisfies
$\floor{\tfrac{3n}2}\le\diam(G)\le\floor{\tfrac{5n}2}$,
for each $n\geq 2$.
All graphs from the set $\mathcal{F}(4,2)$ can be seen in \cref{fig:4-reg-with-2-segments}.

\begin{figure}
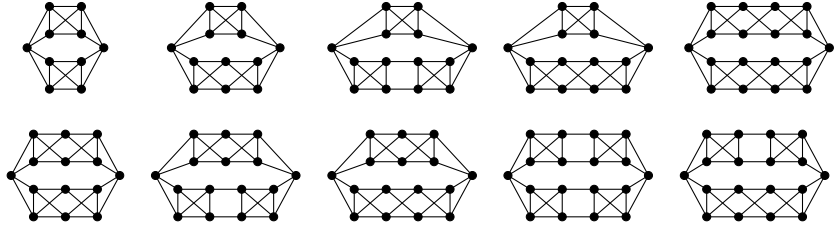

\centering
\figFfourtwo
\caption{The set $\mathcal{F}(4,2)$}
\label{fig:4-reg-with-2-segments}
\end{figure}

\begin{theorem}
\label{four:regular}
Every graph~$G$ in the set $\mathcal{G}(4)$ is Hamiltonian.
Moreover, either $G$~satisfies the conditions of \cref{asratian21}, or
$G$ is isomorphic to $H(4,n)$ or one of graphs in $\mathcal{F}(4,n)$, for some $n\geq 2$.
\end{theorem}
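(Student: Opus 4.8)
The plan is to reuse the dichotomy from the proof of \cref{regular:three}. If every ball of radius two in~$G$ is 2-connected, then $G$ satisfies the hypotheses of \cref{asratian21} and is therefore Hamiltonian. So suppose some ball of radius two is not 2-connected and let $v$ be a cut vertex of such a ball. By \cref{lem:cut-vertex-structure}, $v$ is a cut vertex of $G_2(v)$, the graph $G_2(v)-v$ has exactly two components $A$ and~$B$ with four vertices each, and every neighbour of~$v$ is adjacent to all other vertices of its own component. I would record the local picture in terms of $p\coloneqq\card{N(v)\cap V(A)}$: then $B$ contains $4-p$ neighbours of~$v$, the set $A$ contains $4-p$ vertices of $N_2(v)$ and $B$ contains~$p$, and $1\le p\le 3$. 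By symmetry it suffices to treat $p\in\set{1,2}$. Because $\card{N_2(v)}\le 4$ is already attained, every edge leaving a vertex of $N_2(v)$ must leave $M_2(v)$ altogether; this observation is what will let me follow the structure outward and split the analysis into the cases $p=1$ and $p=2$.

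In the case $p=1$ the single neighbour~$w$ of~$v$ in~$A$ is joined to~$v$ and to the three vertices $x_1,x_2,x_3$ of $N_2(v)\cap V(A)$. Tracking the degrees of $x_1,x_2,x_3$ and the (necessarily outward) edges they send beyond $M_2(v)$, I expect to recover precisely the local shape of $H(4,n)$: the $x_i$ induce a triangle joined to a common further vertex that is again a cut vertex of the same $3:1$ type. Mirroring the inductive construction in \cref{regular:three} --- maintaining a growing sequence of isomorphic pieces together with their cut vertices, and appending a new piece as long as the construction has not returned to its starting vertex --- and then invoking the finiteness of~$G$, the graph closes up into a copy of $H(4,n)$.

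In the case $p=2$ the two neighbours $w_1,w_2$ of~$v$ in~$A$ are joined to all of $A=\set{w_1,w_2,x_1,x_2}$, and the decisive alternative is whether $x_1x_2\in E(G)$. If $x_1x_2\in E(G)$, then $A$ induces a~$K_4$ and $x_1,x_2$ each send a single outward edge to the next cut vertex, which is the block~$T_1$ of \cref{fig:4-reg-components}. If $x_1x_2\notin E(G)$, then $x_1$ and~$x_2$ each acquire two further neighbours, forming a new layer of the segment; analysing how consecutive layers may join --- the saturation of second neighbourhoods at the layer vertices restricts the joins to complete bipartite patterns and matchings --- should leave exactly the blocks $T_2$, $T_3$ and~$T_4$ as the possible segments strung between two consecutive $2:2$ cut vertices. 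Chaining the segments cyclically and using finiteness as before yields $G\in\mathcal{F}(4,n)$. In either case $G$ is one of the explicitly described graphs, and Hamiltonicity is immediate: each block $T_i$ has a Hamilton path from~$a$ to~$b$ through its internal vertices, so for $G\in\mathcal{F}(4,n)$ these paths concatenate around the cyclic chain into a Hamilton cycle, while $H(4,n)$ has the Hamilton cycle that runs through the triangles $W_1,\dotsc,W_n$ in order.

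The main obstacle is the case $p=2$ with $x_1x_2\notin E(G)$: one must verify that the internal layers can only be joined in the three ways realised by $T_2$, $T_3$ and~$T_4$ --- ruling out longer or otherwise-connected segments under the degree and bounded-second-neighbourhood constraints --- and that the endpoints of each segment are genuine cut vertices while the interior vertices of a segment have 2-connected balls, so that segments are neither silently subdivided nor merged. A secondary point that needs care is the consistency of the split: I would check that a $1:3$ cut vertex forces its neighbouring cut vertices to be of the same type, and likewise for $2:2$, so that the two families $H(4,n)$ and $\mathcal{F}(4,n)$ cannot occur together within a single graph~$G$.
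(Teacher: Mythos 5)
Your overall strategy coincides with the paper's: the dichotomy via \cref{asratian21}, the use of \cref{lem:cut-vertex-structure}, the split according to how the four neighbours of the cut vertex are distributed over the two components ($1{:}3$ versus $2{:}2$), the identification of the segments $T_1,\dotsc,T_4$ (resp.\ the $H(4,n)$ pieces), and the finiteness argument that closes the chain into $H(4,n)$ or a member of $\mathcal{F}(4,n)$. The $1{:}3$ case, the propagation of the cut-vertex type along the chain, and the Hamiltonicity of the explicit graphs are all handled as in the paper.

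However, the heart of the theorem is the exhaustive classification of the segments in the $2{:}2$ case, and there your plan contains a concrete error precisely at the step you defer. In the branch $x_1x_2\in E(G)$ (so the component $A$ induces a $K_4$) you assert that $x_1$ and $x_2$ ``each send a single outward edge to the next cut vertex,'' i.e.\ to a common vertex, and conclude that only $T_1$ arises. That is unjustified: the two outward edges may go to two distinct vertices, and then the bound $\card{N_2(\cdot)}\le 4$ applied at $x_1$ (and further out) forces first those two vertices to be adjacent and then the segment to extend to the eight-vertex block $T_2$ before the next cut vertex is reached. Conversely, in the branch $x_1x_2\notin E(G)$ the same bound applied at $w_1$ forces $N(x_1)\setminus A=N(x_2)\setminus A$, and the only segments that can occur there are $T_3$ and $T_4$ --- not $T_2$ as your plan expects. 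So as written your case split would both terminate one branch prematurely (missing the $\mathcal{F}(4,n)$ graphs containing $T_2$) and search for a configuration in the other branch that cannot occur. Since you explicitly leave this classification as ``to be verified,'' the proposal has a genuine gap: the saturation arguments at the interior vertices (five forbidden second neighbours of $x_1$, of $u_1$, of $w_1$, etc.), which the paper carries out vertex by vertex to pin down exactly $T_1$--$T_4$, are the substance of the proof and are missing here.
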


\begin{proof}
Let $G$ be a graph in $\mathcal{G}(4)$.
If every ball of radius two in~$G$ is 2-connected
then $G$~satisfies the conditions of \cref{asratian21} and is thus Hamiltonian.

Suppose that not every ball of radius two in~$G$ is 2-connected,
and let $v_1$~be a cut vertex of a ball of radius~2 in~$G$.
Then by~\cref{lem:cut-vertex-structure},
$v_1$ is a cut vertex of the ball~$G_2(v_1)$,
the subgraph $G_2(v_1)-v_1$ has exactly two components with $4$~vertices each,
and each neighbor of~$v_1$ is adjacent to
all other vertices of the component of $G_2(v_1)-v_1$ it lies in.

Since $G$~is 4-regular,
$v_1$ will have either one neighbor in one of the components of $G_2(v_1)-v_1$ and three in the other,
or two neighbors in each component.

\begin{case}
\label{case:113}
$v_1$ has one neighbor in one of the components of $G_2(v_1)-v_1$ and three in the other.
\end{case}
Consider the component of $G_2(v_1)-v_1$ that contains three neighbors of~$v_1$, denoted $w_{1}, w_{2}, w_{3}$,
and one other vertex which we will call~$u_1$.
By \cref{lem:cut-vertex-structure}, the vertices $u_1, w_{1}, w_{2}, w_{3}$ will be adjacent to each other.
In~$G$, the vertex~$u_1$ will have one additional neighbor, which we will call~$v_2$.
Clearly $d(v_1,v_2)>2$,
since $v_1$~is a cut vertex of $G_2(v_1)$.
But then $v_2$~is a cut vertex of the ball~$G_2(v_2)$,
and one of the components of $G_2(v_2)-v_2$ contains only one neighbor of~$v_2$, namely~$u_1$.
Denote by $F_1$ the subgraph of~$G$ induced by the set $\{v_1,u_1,v_2\}\cup W_1$
where $W_1=\{w_{1}, w_{2}, w_{3}\}$.

By repeating the argument that we used in the proof of \cref{regular:three} we can show that
the graph~$G$ is a union of $n\ge 2$
subgraphs $F_1,\dots,F_n$ such that $F_2,\dots,F_n$ are isomorphic to~$F_1$,
$|V(F_i)\cap V(F_{i+1})|=1$ for $i=1,\dots,n$ (with $F_{n+1}=F_1$)
and $|V(F_i)\cap V(F_{j})|=\emptyset$, if $|i-j|\not=1\mod n$.

It is easy to see that the resulting graph is isomorphic to~$H(4,n)$.

\begin{case}
\label{case:4-reg-22}
$v_1$ has two neighbors in each component of $G_2(v_1)-v_1$.
\end{case}
Let $w_1$ and~$w_2$ be the neighbors of~$v_1$ in one of the components of $G_2(v_1)-v_1$,
and let $u_1$ and~$u_2$ be the other two vertices in that component.
As noted above, $w_1$ and~$w_2$ are adjacent to each other and to $u_1$ and~$u_2$.
Let $S=\set{w_1,w_2,u_1,u_2}$.

\begin{subcase}
$u_1u_2\in E(G)$.
\end{subcase}
In this case $u_1$ and~$u_2$ each have one neighbor outside~$S$.

\begin{subsubcase}
$u_1$ and $u_2$ are adjacent to the same vertex~$v_2\notin S$.
\end{subsubcase}
In this case the graph~$F_1$ induced by the set $S\cup\set{v_1,v_2}$ is isomorphic to~$T_1$.
Clearly $d(v_1,v_2)>2$, since $v_1$~is a cut vertex of $G_2(v_1)$.

\begin{subsubcase}
$u_1$ and $u_2$ are adjacent to different vertices outside~$S$.
\end{subsubcase}
Let $x_1$ be the neighbor of~$u_1$ outside~$S$,
and let $x_2$ be the neighbor of~$u_2$ outside~$S$.
Clearly $x_1x_2\in E(G)$,
since otherwise there are five vertices at distance two from~$u_1$, namely $x_2$,~$v_1$ and three neighbors of~$x_1$ outside~$S$, contradicting the conditions of the theorem.
Let $y_1$ and~$y_2$ be the two neighbors of~$x_1$ outside~$S\cup\set{x_2}$.
Then $w_1$, $w_2$, $u_2$
and the vertices in the set~$N(y_1)\setminus\set{x_2,y_2}$ are at distance~2 from~$x_1$.
This means that the latter set can contain at most one vertex,
so $y_1$~is adjacent to $x_2$ and~$y_2$.
Similarly we see that $y_2$~is also adjacent to~$x_2$.
Now $y_1$ and~$y_2$ have one neighbor each outside $\set{x_1,x_2,y_1,y_2}$.
Let $v_2$ and~$z$ be the neighbors of~$y_1$ and~$y_2$, respectively, outside this set.
If~$v_2\not=z$ then there are five vertices, namely $w_1$, $w_2$, $u_2$, $v_2$ and~$z$, at distance~2 from~$x_1$,
a contradiction.
Thus $v_2=z$,
which means that the subgraph~$F_1$ induced by $S\cup\set{x_1,x_2,y_1,y_2,v_1,v_2}$
is isomorphic to~$T_2$.
Note that $v_2\ne v_1$,
since otherwise $G=G_2(v_1)$, contradicting the fact that $v_1$~is a cut vertex of~$G_2(v_1)$.
This means that $u_1$ and~$u_2$ are at distance~3 from~$v_2$,
so $v_2$~is a cut vertex of~$G_2(v_2)$.

\begin{subcase}
$u_1u_2\notin E(G)$.
\end{subcase}
In this case $u_1$ and~$u_2$ each has two neighbors outside~$S$,
and these neighbors are at distance~2 from~$w_1$.
Clearly $N(u_1)=N(u_2)$, since otherwise there are more than four vertices at distance~2 from~$w_1$.
Let $x_1$ and~$x_2$ be the neighbors of $u_1$ and~$u_2$ outside~$S$.

\begin{subsubcase}
$x_1x_2\in E(G)$.
\end{subsubcase}
In this case $x_1$ and~$x_2$ both have one more neighbor.
Let~$v_2$ be the neighbor of~$x_1$ outside~$S\cup\set{x_2}$.
If $x_2v_2\notin E(G)$, then there are five vertices at distance~2 from~$x_1$, namely
$w_1$, $w_2$, and the three other neighbors of~$v_2$.
Thus $x_2v_2\in E(G)$,
which means that the subgraph induced by $S\cup\set{x_1,x_2,v_1,v_2}$
is isomorphic to~$T_3$.
Note that $v_2\notin M_1(v_1)$,
since otherwise $G=G_2(v_1)$, contradicting the fact that $v_1$~is a cut vertex of~$G_2(v_1)$.
This means that $w_1$ and~$w_2$ are at distance~3 from~$v_2$,
so $v_2$~is a cut vertex of~$G_2(v_2)$.

\begin{subsubcase}
$x_1x_2\notin E(G)$.
\end{subsubcase}
In this case $x_1$ and~$x_2$ each has two neighbors outside~$S$,
and these neighbors are at distance~2 from~$u_1$.
Clearly $N(x_1)=N(x_2)$, since otherwise there are more than four vertices at distance~2 from~$u_1$.
Let $y_1$ and~$y_2$ be the neighbors of $x_1$ and~$x_2$ outside~$S$.
Then the vertices $w_1$, $w_2$, $x_2$,
and any neighbors of $y_1$ and~$y_2$ that are not adjacent to~$x_1$ are at distance~2 from~$x_1$.
Thus $y_1$ and~$y_2$ are adjacent to each other and to one common additional vertex~$v_2$,
which means that the subgraph induced by $S\cup\set{x_1,x_2,y_1,y_2,v_1,v_2}$
is isomorphic to~$T_4$.
Note that $v_2\ne v_1$,
since otherwise $G=G_2(v_1)$, contradicting the fact that $v_1$~is a cut vertex of~$G_2(v_1)$.
This means that $u_1$ and~$u_2$ are at distance~3 from~$v_2$,
so $v_2$~is a cut vertex of~$G_2(v_2)$.

\medskip
To conclude \cref{case:4-reg-22},
we see that in all subcases we found a subgraph~$F_1$ that is isomorphic to one of the graphs
$T_1, T_2, T_3,T_4$ and has only two vertices of degree~2, namely $v_1$ and~$v_2$.

Assume that we have already
found $t\geq 1$ subgraphs $F_1,\dots,F_t$ each of which is
isomorphic to one of the graphs $T_1, T_2, T_3,T_4$ such that
\begin{itemize}
\item $F_i$ has only two vertices of degree~2 in~$F_i$, namely $v_i$ and~$v_{i+1}$, $i=1,\dots,t$,
\item $v_i$ is a cut vertex of the ball $G_2(v_i)$, for $i=1,\dots,t+1$,
\item if $t\ge 2$ then $V(F_{i})\cap V(F_{i+1})=\{v_{i+1}\}$, for $i=1,\dots,t-1$.
\end{itemize}

If $v_{t+1}\ne v_1$, then using for $v_{t+1}$ the same argument as for~$v_1$ we will find in~$G$ a subgraph~$F_{t+1}$
isomorphic to one of the graphs $T_1,T_2,T_3,T_4$ such that $V(F_{t+1})\cap V(F_t)=\{v_{t+1}\}$
and $v_{t+1}$ is a cut vertex of the ball $G_2(v_{t+1})$.

Since $G$~is finite, after finite number of steps we will find subgraphs $F_1,\dots,F_n$ and vertices
$v_1,\dots,v_{n+1}$
such that $v_{n+1}=v_1$ and $\bigcup_{i=1}^nF_i=G$.
This means that $G\in\mathcal{F}(4,n)$
and therefore $G$~is Hamiltonian.
The proof of the theorem is complete.
\end{proof}

Note that \cref{regular:three} and \cref{four:regular} imply \cref{pippert74}. This holds because
every connected, locally connected, $k$-regular graph with $k\in\{3,4\}$ is
the square of a cycle~$C_n$ of length~$n$, $n\geq 4$, (see~\cite{gordon11}),
and the square of~$C_n$ belongs to $\mathcal{G}(3)$
if $n=4$ and to $\mathcal{G}(4)$ if $n>4$.

\begin{corollary}
\label{corollary:nonclaw}
The set $\mathcal{G}(4)$ contains an infinite set of Hamiltonian graphs that are not claw-free or
locally connected.
\end{corollary}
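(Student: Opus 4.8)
The plan is to carve out an explicit infinite subfamily of the class $\mathcal{F}(4,n)$ constructed just before \cref{four:regular}. Once we confirm that a chosen graph lies in $\mathcal{G}(4)$, \cref{four:regular} delivers its Hamiltonicity for free, so the task splits into four checks: membership in $\mathcal{G}(4)$, failure of local connectedness, failure of the claw-free property, and infinitude of the family. The last is immediate, since a graph glued from $n$ blocks has order growing linearly in~$n$, making the chosen graphs pairwise non-isomorphic.

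Concretely, for each $n\ge 2$ I would take $G_n\in\mathcal{F}(4,n)$ to be the graph in which every block~$F_i$ is a copy of~$T_3$ (\cref{fig:4-reg-components}), so that $\defset{G_n}{n\ge 2}$ is infinite. That $G_n$ is not locally connected is in fact uniform across all of $\mathcal{F}(4,n)$: a gluing vertex~$u_i$ has exactly four neighbours, namely the two first-column vertices of one incident block and the two last-column vertices of the other. Within each block these two vertices are adjacent, but no edge joins a vertex of one block to a vertex of the other, so $N(u_i)$ induces~$2K_2$ and is disconnected.

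That $G_n$ fails to be claw-free is where the specific choice of~$T_3$ matters. The block~$T_3$ is the one lacking the ``middle'' vertical edge, so its two second-column vertices are non-adjacent. Taking a first-column vertex~$r$ of a $T_3$-block, its neighbours are the gluing vertex~$a$, the other first-column vertex, and the two second-column vertices~$s$ and~$s'$; here $a$, $s$, $s'$ are pairwise non-adjacent, since $a$ is joined inside the block only to the two first-column vertices and $s,s'$ are non-adjacent by construction. Hence $r$ together with $a$, $s$, $s'$ induces a~$K_{1,3}$.

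The only step requiring care — and hence the main obstacle — is verifying that each $G_n$ genuinely belongs to~$\mathcal{G}(4)$, i.e. that no vertex acquires more than four vertices at distance~$2$. This reduces to a finite, routine count over the few vertex types of~$T_3$ together with the gluing vertices, of exactly the kind already carried out implicitly when the admissible blocks $T_1,\dots,T_4$ were isolated in the proof of \cref{four:regular}; once it is confirmed, Hamiltonicity of every~$G_n$ is immediate from that theorem.
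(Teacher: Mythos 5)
Your proposal is correct and takes essentially the same approach as the paper, which likewise picks out the graphs in $\defset{\mathcal{F}(4,n)}{n\geq 2}$ containing an induced $T_3$ (or $T_4$) block and observes that they are neither claw-free nor locally connected, with Hamiltonicity coming from the surrounding construction. The one step you defer, membership of your all-$T_3$ graphs in $\mathcal{G}(4)$, is indeed a routine finite check (every vertex has at most four vertices at distance~2), and the paper's own proof leaves it implicit as well.
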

\begin{proof}
Consider the set of all graphs in $\defset{\mathcal{F}(4,n)}{n\geq 2}$ containing an induced subgraph~$T_3$ or~$T_4$.
Clearly this set is infinite and no graph in this set is claw-free or locally connected.
\end{proof}

\begin{theorem}
\label{five:regular}
Every graph~$G$ in the set $\mathcal{G}(5)$ is Hamiltonian.
Moreover, either $G$~satisfies the conditions of \cref{asratian21}, or
$G$~is isomorphic to one of the graphs
$G(5,n)$ and $H(5,n)$, for some $n\geq 2$.
\end{theorem}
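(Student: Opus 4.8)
The plan is to follow the scheme of \cref{regular:three,four:regular}. First I would observe that if every ball of radius~2 in~$G$ is 2-connected then $G$ meets the hypotheses of \cref{asratian21} and is Hamiltonian, so I may assume that some ball of radius~2 has a cut vertex. By \cref{lem:cut-vertex-structure} there is then a vertex~$v_1$ that is a cut vertex of its own ball~$G_2(v_1)$, the graph $G_2(v_1)-v_1$ has exactly two components, each on five vertices, and each neighbour of~$v_1$ is adjacent to all other vertices of its component. As $G$ is 5-regular, the five neighbours of~$v_1$ split between the two components either as $1+4$ or as $2+3$. These are the two cases, and I expect them to yield exactly the graphs $G(5,n)$ and $H(5,n)$ of \cref{claw-free1,claw-free2}.

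In the $1+4$ case the argument is the one already used for the $H$-graphs in \cref{regular:three,four:regular}. The component containing the four neighbours $w_1,\dots,w_4$ also contains one non-neighbour~$u_1$; by \cref{lem:cut-vertex-structure} the set $\set{w_1,\dots,w_4,u_1}$ induces a $K_5$, and the single remaining edge at~$u_1$ goes to a vertex~$v_2$ with $d(v_1,v_2)>2$. Since $w_1,\dots,w_4$ have all their neighbours inside $\set{v_1,w_1,\dots,w_4,u_1}$ and $v_1\notin G_2(v_2)$, deleting~$v_2$ cuts $\set{u_1,w_1,\dots,w_4}$ off from the rest of~$G_2(v_2)$, so $v_2$ is again a cut vertex whose unique neighbour in that component is~$u_1$. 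This reproduces the block $\set{v_1,u_1,v_2}\cup\set{w_1,\dots,w_4}$ of~$H(5,n)$, and iterating along $v_1,v_2,\dots$ until the chain closes gives $G\cong H(5,n)$.

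The real content is the $2+3$ case. Let $B$ be the component meeting~$v_1$ in two neighbours $p,q$ and three non-neighbours $r_1,r_2,r_3$. By \cref{lem:cut-vertex-structure}, $p$ and~$q$ are joined to everything in~$B$, so the only freedom lies in the edges among the~$r_i$ and the edges leaving them; each~$r_i$ needs three further neighbours besides $p,q$. The target is to prove that $B$ induces a $K_5$ and that $r_1,r_2,r_3$ have a common neighbour~$v_2\notin M_2(v_1)$, which is the attachment pattern of a block of~$G(5,n)$. The key constraint is $\card{N_2(p)}\le5$: since $N_2(p)$ consists of the three neighbours of~$v_1$ in the other component together with all neighbours of $r_1,r_2,r_3$ outside~$M_2(v_1)$, the three~$r_i$ have at most two such external neighbours between them.

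I expect ruling out the two-external-neighbour possibility to be the main obstacle, as it is where the bound $\card{N_2}\le5$ must be exploited most carefully. Here one supposes the~$r_i$ reach two distinct vertices $z_1,z_2\notin M_2(v_1)$; these have degree~5, and a careful but routine count of $N_2(z_1)$, $N_2(z_2)$ and the $N_2(r_i)$ shows that one of these balls acquires a sixth vertex at distance~2, or that some~$z_j$ acquires a sixth neighbour, contradicting $G\in\mathcal{G}(5)$ or 5-regularity. Hence the~$r_i$ share a single external neighbour~$v_2$, each~$r_i$ is adjacent to the other two, and $B$ induces a~$K_5$. After this the bookkeeping mirrors the first case: $v_2$ is a cut vertex of~$G_2(v_2)$ meeting~$B$ in exactly $r_1,r_2,r_3$, so \cref{lem:cut-vertex-structure} applies again and the split at~$v_2$ is again of type~$2+3$; thus the two types never mix. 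Since $G$ is finite and connected, the chain $v_1,v_2,\dots$ closes after $n\ge2$ steps, and the $K_5$-blocks, each attached to its predecessor by two vertices and to its successor by three, reassemble into~$G(5,n)$. In either case $G$ is one of the graphs of \cref{claw-free1,claw-free2}, which are Hamiltonian.
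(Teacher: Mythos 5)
Your overall plan coincides with the paper's: the same reduction to a cut vertex $v_1$ of $G_2(v_1)$ via \cref{lem:cut-vertex-structure}, the same split into the $1+4$ and $2+3$ distributions of $N(v_1)$ over the two components, the same observation that in the $2+3$ case the three non-neighbours $r_1,r_2,r_3$ of $v_1$ in their component have at most two neighbours outside $S=\set{w_1,w_2,r_1,r_2,r_3}$ (because $\card{N_2(w_1)}\le 5$), and the same iteration producing $H(5,n)$ or $G(5,n)$. The $1+4$ case and the closing iteration are handled at essentially the paper's level of detail.

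The gap is in the $2+3$ case, which is the actual content of the theorem for $k=5$: you reduce everything to "a careful but routine count of $N_2(z_1)$, $N_2(z_2)$ and the $N_2(r_i)$", but this elimination is neither routine nor confined to those second neighbourhoods, and it is precisely what you must supply. Concretely, the paper needs three separate steps here. First, a pigeonhole argument shows some $r_i$ (say $r_1$) is adjacent to both others. Second, if $r_2r_3\notin E(G)$, then $r_2,r_3$ share two external neighbours $x_1,x_2$ and $r_1$ is adjacent to one of them; the contradiction is obtained not from $N_2$ of the $r_i$ or of $x_1,x_2$ alone, but by going one step further out to a vertex $y\in N(x_2)\setminus(S\cup\set{x_1})$ not adjacent to $x_1$ and counting $N_2(x_2)$, with a case split on whether $x_1x_2\in E(G)$. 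Third, if the $r_i$ form a triangle but do not share a single external neighbour, say $r_1x_1,\,r_2x_2,\,r_3x_2\in E(G)$ with $x_1\ne x_2$, one first forces $x_1x_2\in E(G)$ (else $\card{N_2(r_1)}\ge 6$), then introduces the three remaining neighbours $y_1,y_2,y_3$ of $x_1$ and a further vertex $z_4\in N(y_1)$ outside $\set{x_1,x_2,y_1,y_2,y_3}$; counting $N_2(x_1)$ then forces $y_1,y_2,y_3$ to be adjacent to $x_2$, giving $d(x_2)=6$, a contradiction with $5$-regularity. Both eliminations thus require auxiliary vertices two and three steps away from $v_1$ that your sketch never introduces, so as written the proposal asserts rather than proves the key claim that $S$ induces a $K_5$ whose three vertices $r_1,r_2,r_3$ have a common external neighbour $v_2$; until that analysis is carried out, the $2+3$ case, and hence the theorem, is not established.
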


\begin{proof}
Let $G$ be a graph in $\mathcal{G}(5)$.
If every ball of radius two in~$G$ is 2-connected
then $G$~satisfies the conditions of \cref{asratian21} and is thus Hamiltonian.

Suppose that not every ball of radius two in~$G$ is 2-connected,
and let $v_1$~be a cut vertex of a ball of radius~2 in~$G$.
Then by~\cref{lem:cut-vertex-structure},
$v_1$ is a cut vertex of the ball~$G_2(v_1)$,
the subgraph $G_2(v_1)-v_1$ has exactly two components with $5$~vertices each,
and each neighbor of~$v_1$ is adjacent to
all other vertices of the component of $G_2(v_1)-v_1$ it lies in.
Since $G$~is 5-regular,
$v_1$ will have either one neighbor in one of the components of $G_2(v_1)-v_1$ and four in the other,
or two neighbors in one component and three in the other.

\begin{case}
\label{case:114}
$v_1$ has one neighbor in one of the components of $G_2(v_1)-v_1$ and four in the other.
\end{case}
Consider the component of $G_2(v_1)-v_1$ which contains four neighbors of~$v_1$.
Let $W_1=\{w_{1},w_{2},w_{3},w_{4}\}$ denote the set of four neighbors of~$v_1$ in this component, and let $u_1$ denote the fifth vertex in the same component. By~\cref{lem:cut-vertex-structure},
all of these five vertices are adjacent to each other.
In~$G$, the vertex~$u_1$ will have one additional neighbor, which we will call~$v_2$.
Clearly $d(v_1,v_2)>2$, since
$v_1$~is a cut vertex of $G_2(v_1)-v_1$.
But then $v_2$~is a cut vertex of the ball~$G_2(v_2)$,
and one of the components of $G_2(v_2)-v_2$ contains only one neighbor of~$v_2$, namely~$u_1$.
Denote by $F_1$ the subgraph of~$G$ induced by the set $\{v_1,u_1,v_2\}\cup W_1$.

By repeating the argument that we used in the proof of \cref{regular:three} we can show that
in~$G$ there are $n\geq 2$
subgraphs $F_1,\dots,F_n$ and $n$~vertices $v_1,\dots,v_n$ such that
$\bigcup_{i=1}^nF_i=G$, each $F_i$ is isomorphic to~$F_1$, $2\leq i\leq n$,
$V(F_i)\cap V(F_{i+1})=\{v_{i+1}\}$, for $i=1,\dots,n$,
and $V(F_i)\cap V(F_{j})=\emptyset$, for all $|i-j|\ne 1 \mod n$, where we consider $v_{n+1}=v_1$.

It is easy to see that the resulting graph is isomorphic to~$H(5,n)$ and therefore is Hamiltonian.

\begin{case}
$v_1$ has two neighbors in one of the components of $G_2(v_1)-v_1$ and three in the other.
\end{case}
Consider the component of $G_2(v_1)-v_1$ which has two neighbors of~$v_1$.
Let $w_1$ and~$w_2$ be the two neighbors of~$v_1$ in this component,
and let the other three vertices in the component be~$z_1$, $z_2$, and~$z_3$.
As noted above, $w_1$ and~$w_2$ are adjacent to each other and to $z_1$, $z_2$, and~$z_3$.
At distance~2 from~$w_1$ are the three other neighbors of~$v_1$, and all neighbors of $z_1$, $z_2$, and~$z_3$.
outside the set $S=\set{w_1,w_2,z_1,z_2,z_3}$.
Thus $z_1$, $z_2$, and~$z_3$ can together have
at most two neighbors outside~$S$.
Since $G$~is 5-regular,
this means that each of the vertices $z_1$, $z_2$, and~$z_3$
must be adjacent to at least one of the other two,
which means that one of them, say~$z_1$, must be adjacent to both $z_2$ and~$z_3$.

Suppose that $z_2z_3\notin E(G)$.
Then $z_2$ and~$z_3$ are both adjacent to two vertices $x_1$ and~$x_2$ outside~$S$,
and~$z_1$ will be adjacent to one of them, say~$x_1$.
Since $G$ is 5-regular,
$x_2$ will have at least one neighbor~$y$ outside $S\cup\set{x_1}$ that is not adjacent to~$x_1$.
If $x_1x_2\in E(G)$, then $y$~will have at least three neighbors outside $M_1(x_2)$,
and these vertices together with $w_1$, $w_2$, and~$z_1$ are all at distance~2 from~$x_2$,
contradicting the fact that $G\in \mathcal{G}(5)$.
If instead $x_1x_2\notin E(G)$, then $y$~will have at least two neighbors outside $M_1(x_2)$,
and these vertices together with $w_1$, $w_2$, $z_1$ and~$x_1$ are all at distance~2 from~$x_2$,
contradicting the fact that $G\in \mathcal{G}(5)$.

Thus we can conclude that $z_2z_3\in E(G)$.
Since $z_1$, $z_2$, and~$u_z$ are all adjacent to each other,
each of them has exactly one neighbor outside~$S$,
and as mentioned before they can together have at most two neighbors outside~$S$.

Suppose that $z_1$, $z_2$, and~$u_z$ are not all adjacent to the same vertex outside~$S$.
Then w.l.o.g. we can assume that $z_1$ is adjacent to a vertex~$x_1\notin S$
and $z_2$ and~$z_3$ are adjacent to another vertex~$x_2\notin S$.
We can see that $x_1x_2\in E(G)$,
since otherwise $z_1$ would have six vertices at distance~2:
$v_1$, $x_2$, and the four remaining neighbors of~$x_1$.
Thus $x_1$ has three neighbors apart from $z_1$ and~$x_2$,
we will call them $y_1$, $y_2$, and~$y_3$.
Since $G$~is 5-regular, $y_1$ must have a neighbor~$z_4$ outside $\set{x_1,x_2,y_1,y_2,y_3}$.
Then $x_1$ has five vertices at distance~2,
namely $w_1$, $w_2$, $z_2$, $z_3$, and~$z_4$.
Thus all remaining neighbors of $y_1$, $y_2$, and~$y_3$ must be in $\set{x_2,y_1,y_2,y_3,z_4}$,
which means that $y_1$, $y_2$, and~$y_3$ are all adjacent to~$x_2$.
But then $d(x_2)=6$, a contradiction.

Thus, we can conclude that $z_1$, $z_2$, and~$z_3$ are all adjacent to the same vertex outside~$S$,
denoted~$v_2$. Set $V_1=\{w_1,w_2,z_1,z_2,z_3\}$.
Clearly, the subgraph induced by~$V_1$ is a complete graph.

Assume that we have already found in~$G$ a sequence of disjoint sets $V_0,V_1,\dots,V_t$ such that
$V_0=\{v_1,\dots,v_{t+1}\}$,
and for $i=1,\dots,t$, the subgraph induced by~$V_i$ is complete, $v_i$ is adjacent to two vertices in~$V_i$, and the other vertices in $V_i$ are adjacent to~$v_{i+1}$.

If $v_{t+1}\not=v_1$ then using the same argument as we used for~$v_1$
we will find a set~$V_{t+1}$ and a vertex $v_{t+2}\notin V_{t+1}$ such that the subgraph induced by~$V_{t+1}$ is complete,
$V_{t+1}\cap \bigl(\bigcup_{i=0}^tV_i\bigr)=\emptyset$ and $v_{t+2}$ is adjacent to those vertices in~$V_{t+1}$ that are not
neighbors of~$v_{t+1}$. Then put $V_0\coloneqq V_0\cup \{v_{t+2}\}$.

Since $G$~is finite, after finite number of steps we will find $n+1$ disjoint sets $V_0,V_1,\dots,V_n$ and
a vertex~$v_{n+1}$ such that $V_0=\{v_1,\dots,v_n\}$, $v_{n+1}=v_1$ and $V_0\cup V_1\cup\dots\cup V_n=V(G)$.
Thus $G$ is isomorphic to~$G(5,n)$
and therefore Hamiltonian.
The proof of the theorem is complete.
\end{proof}

\begin{remark}
The graphs $G(5,n)$ and $H(5,n)$ are not locally connected for $n\geq 2$ and therefore do not satisfy the conditions of Kikust's theorem (\cref{kikust75}).
On the other hand there exist locally connected, 5-regular graphs which do not satisfy \cref{five:regular}.
Therefore \cref{five:regular} and Kikust's theorem are incomparable to each other in the sense that neither theorem implies the other.
\end{remark}

\section{Hamiltonicity of graphs in \texorpdfstring{\boldmath$\mathcal{G}(k)$, $k\ge6$\unboldmath}{𝓖(𝑘), 𝑘≥6}}

The next result shows that
the condition of 2-connectedness of balls of radius~2 in {\cref{asratian21}}
cannot be omitted in the case $ k\geq 6$.

\begin{proposition}
\label{nonHamiltonian}
For any $k\geq 6$, the set $\mathcal{G}(k)$ contains non-Hamiltonian graphs.
\end{proposition}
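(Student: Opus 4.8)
The plan is to prove the \lcnamecref{nonHamiltonian} by exhibiting, for each $k\ge6$, an explicit graph $G_k\in\mathcal G(k)$ carrying a parity obstruction to Hamiltonicity. The building blocks are two \emph{hubs} and three \emph{columns}. Each column $y\in\set{1,0,-1}$ consists of two cliques $G_{-1,y},G_{1,y}\cong K_k$; in each of these, two vertices are designated \emph{cross-ports} and the remaining $k-2$ are \emph{hub-ports}. The two cross-ports of $G_{-1,y}$ are joined to those of $G_{1,y}$ by two \emph{cross-edges}, and all $k-2$ hub-ports of $G_{x,y}$ are joined to a single connector vertex $w_{x,y}$. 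Finally, for each $x\in\set{-1,1}$ the three connectors $w_{x,1},w_{x,0},w_{x,-1}$ are attached to a hub $H_x$, each through exactly two edges, where $H_x$ is a dense graph chosen so that the whole graph is $k$-regular. (For $k=6$ one may take each hub to be a $K_6$ whose six vertices form the three connector-pairs; for larger $k$ the hub must be enlarged while keeping every connector adjacent to only two hub-vertices.) First I would write $G_k$ down precisely and record the sizes.

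The second step is to check $G_k\in\mathcal G(k)$. Connectedness and $k$-regularity are immediate from the degree bookkeeping. The only delicate point is the bound $\card{N_2(u)}\le k$, which I would verify by running through the few vertex-types (clique-interior vertex, cross-port, hub-port, connector, hub-vertex). The bound is tight at several of them: a cross-port sees its own gadget's connector, the partner gadget and one more vertex; a connector $w_{x,y}$ sees the two cross-ports of its gadget together with the hub-vertices reachable through its two hub-edges. For the latter to number at most $k-2$, the hub must be dense enough that the two neighbours of $w_{x,y}$ in $H_x$ have essentially the same further neighbourhood, which is exactly what constrains the choice of $H_x$.

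For non-Hamiltonicity I would argue by contradiction from a hypothetical Hamilton cycle $C$, using only that $C$ crosses the edge-boundary of any vertex set an even number of times, and that it crosses the boundary of any proper nonempty set at least twice. For a column $y$ write $c_y\in\set{0,1,2}$ for the number of its cross-edges used by $C$, and $a_{x,y}\in\set{0,1,2}$ for the number of the two edges at $w_{x,y}$ that run into $G_{x,y}$ (the remaining $2-a_{x,y}$ run to the hub). Applying the parity principle to $V(G_{x,y})$ gives $a_{x,y}+c_y$ even. If $c_y=0$, then each gadget's only cycle-connections pass through its own connector, forcing $V(G_{x,y})\cup\set{w_{x,y}}$ to have empty boundary; if $c_y=2$, then $V(G_{-1,y})\cup V(G_{1,y})$, together with whichever connectors satisfy $a_{x,y}=2$, again has empty boundary. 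Either way $C$ would be confined to a proper subset of the vertices, which is impossible. Hence $c_y=1$ and $a_{-1,y}=a_{1,y}=1$ for \emph{every} column. But then applying the parity principle to the hub $H_x$ yields boundary $\sum_y(2-a_{x,y})=3$, which is odd — the final contradiction.

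The main obstacle is twofold. The combinatorial heart is the case analysis forcing every column into the unique ``through'' configuration $c_y=1$; I expect this, rather than any toughness violation (the graph is likely $1$-tough), to be what makes the example nontrivial, and once it is in place the odd-versus-even clash at the hubs is immediate. The more technical obstacle is engineering the hubs $H_x$ so that $G_k\in\mathcal G(k)$ for \emph{all} $k\ge6$ at once: each hub must present an odd number (three) of connector-ports, yet be dense enough that no connector acquires more than $k$ vertices at distance $2$. Since the second neighbourhoods come out tight, this balancing is where the real work lies.
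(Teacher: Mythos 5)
Your $k=6$ graph is exactly the paper's example (the graph of \cref{fig:counterexample6}): your ``column cliques with two cross-ports'' are the sets $U_i\cup V_i$ and $V_i'\cup U_i'$ of the paper's $H_i$, and your connectors and $K_6$ hubs are the $w_i,w_i'$ and $\tilde H,\tilde H'$. Your parity argument (each column is forced into the configuration $c_y=1$, $a_{-1,y}=a_{1,y}=1$, after which each hub's cut carries three cycle edges, an odd number) is correct, and it is actually a useful addition, since the paper merely asserts non-Hamiltonicity without argument. The genuine gap is that \cref{nonHamiltonian} claims \emph{all} $k\ge6$, and your construction is only completed for $k=6$. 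By normalizing ``exactly two cross-edges per column and exactly two hub-edges per connector'' you force yourself to build, for every $k>6$, a hub containing three pairs of adjacent twin ports of degree $k-1$ whose common further neighborhood has size exactly $k-2$, while also keeping $\card{N_2}\le k$ for the hub's own vertices and for the ports (the ports already inherit $k-2$ second neighbors from the column clique through the connector). This is not a routine verification: the naive candidate $K_{k+1}$ minus a matching on six vertices gives a connector $k+1$ vertices at distance two, and splitting the hub into three separate cliques disconnects the whole graph, because distinct columns communicate only through the hubs. You acknowledge this yourself (``this balancing is where the real work lies''), so as it stands the proposal proves the proposition only for $k=6$.

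The paper avoids the hub-engineering problem entirely by not fixing the attachment numbers: choose $k_1+k_2+k_3=k$ with $k_i\ge2$ (and, to keep second neighborhoods small, roughly balanced so that $2k_i\le k$), let column $i$ have $k_i$ cross-edges (a perfect matching between $V_i$ and $V_i'$), and let $w_i$ send $k_i$ edges to a hub that is simply $K_k$, whose $k$ vertices are partitioned among $w_1,w_2,w_3$ into groups of sizes $k_1,k_2,k_3$. Then $k$-regularity is immediate and $\card{N_2(u)}\le k$ holds for every vertex type (tight at $k$ for connectors, cross-ports and hub vertices, and equal to $2k_i$ for the vertices of $U_i$). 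Your non-Hamiltonicity argument transfers verbatim: the parity relation $a_{x,y}\equiv c_y\pmod2$ at each column clique, together with the empty-cut contradictions when $a_{-1,y},a_{1,y}$ are both even, still forces $a_{x,y}=1$ for every column, and then each hub's cut again receives $1+1+1=3$ cycle edges. So your proof strategy is sound; replacing the rigid ``two hub-edges per connector'' by the variable $k_i$ attachment closes the gap for all $k\ge6$.
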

\begin{proof}
Pick three integers $k_1, k_2, k_3\ge {2}$ such that $k_1+k_2+k_3=k$.
We define three disjoint graphs $H_1, H_2, H_3$ as follows:
For each~$i=1,2,3$,
the vertex set of~$H_i$ is
$U_i\cup V_i\cup V'_i\cup U'_i\cup\set{w_i,w'_i}$ where

\begin{figure}
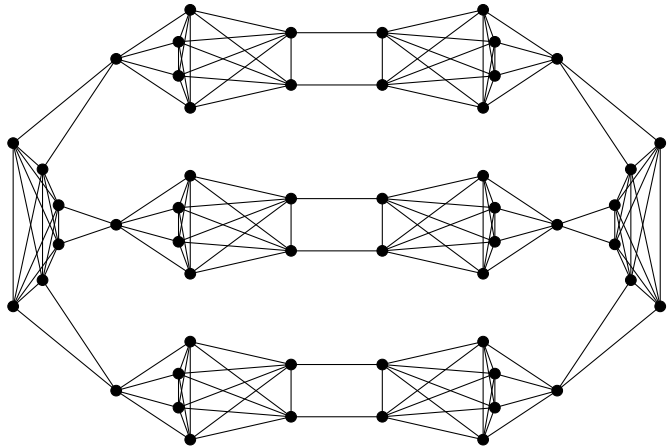

\centering
\figcounterexample
\caption{A non-Hamiltonian graph in~$\mathcal{G}(6)$.}
\label{fig:counterexample6}
\end{figure}

\begin{itemize}
\item the sets $U_i$, $V_i$, $V'_i$, $U'_i$,~and $\set{w_i,w'_i}$
are pairwise disjoint,
\item $\card{V_i}=\card{V'_i}=k_i$ and $\card{U_i}=\card{U'_i}=k-k_i$,
\item each of the sets~$U_i$, $V_i$, $V'_i$ and~$U'_i$
induces a complete subgraph,
\item every vertex in~$U_i$ is adjacent to $w_i$ and to all vertices in~$V_i$,
\item every vertex in~$U'_i$ is adjacent to $w'_i$ and to all vertices in~$V'_i$.
\item every vertex in~$V_i$ is adjacent to exactly one vertex in~$V'_i$, and vice versa.
\end{itemize}
We also let $\tilde H$ and $\tilde H'$ be two copies of~$K_k$,
disjoint from each other and from $H_1$, $H_2$, and~$H_3$.

Now we will use the parts defined above to construct
a non-Hamiltonian graph in~$\mathcal{G}(k)$.
We will start with the graph $\tilde H\cup H_1\cup H_2\cup H_3\cup \tilde H'$.
To this graph we will add edges such that for each $i=1,2,3$
the vertex~$w_i$ is adjacent to $k_i$~vertices of~$\tilde H$
and each vertex of $\tilde H$ is adjacent to exactly one of $w_1$, $w_2$, and~$w_3$,
and similarly for~$\tilde H'$ and $w'_1$, $w'_2$, and~$w'_3$.
The resulting graph lies in~$\mathcal{G}(k)$ but is not Hamiltonian.
The unique graph for $k=6$ (with $k_1=k_2=k_3=2$) can be seen in \cref{fig:counterexample6}.
\end{proof}

For $k=6$ we furthermore have the following:

\begin{theorem}
\label{NP-complete}
The problem of determining whether there exists a Hamilton cycle in
a graph from $\mathcal{G}(6)$ is NP-complete.
\end{theorem}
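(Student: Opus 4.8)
The plan is to prove NP-completeness by a polynomial reduction from the Hamilton cycle problem in cubic bipartite graphs, which is known to be NP-complete. Membership in NP is immediate, since a Hamilton cycle is a certificate verifiable in linear time. For the hardness part, let $G$ be a cubic bipartite graph with parts $A$ and $B$. I would build a graph $G^\ast$ as follows: replace each vertex $x\in A$ by a clique $U_x\cong K_4$; replace each vertex $y\in B$ by a clique $V_y\cong K_6$ whose six vertices are split into three fixed pairs, one pair $P_f\subseteq V_y$ for each edge $f$ incident to $y$; and replace each edge $e=xy$ (with $x\in A$, $y\in B$) by a new vertex $w_e$ joined to all four vertices of $U_x$ and to the two vertices of $P_e\subseteq V_y$. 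This is in the spirit of the gadgets used in \cref{nonHamiltonian}, but tuned to force an edge-selection behaviour. The construction is clearly polynomial, and $G^\ast$ is connected whenever $G$ is.

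First I would verify that $G^\ast\in\mathcal{G}(6)$. Every vertex has degree $6$: a vertex of $U_x$ is adjacent to the other three vertices of $U_x$ and to the three edge-vertices at $x$; a vertex of $V_y$ is adjacent to the other five vertices of $V_y$ and to the unique edge-vertex of its pair; and $w_e$ has four neighbours in $U_x$ and two in $V_y$. A direct computation then shows that each vertex has exactly six vertices at distance $2$ (for instance, $N_2(w_e)$ consists of the two other edge-vertices at $x$ together with the four vertices of $V_y$ outside $P_e$), so the second-neighbourhood bound holds with equality and $G^\ast\in\mathcal{G}(6)$.

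For the forward implication I would turn a Hamilton cycle $C$ of $G$ into one of $G^\ast$. At each vertex of $G$ exactly two incident edges lie on $C$, and I route the new cycle so that at an $A$-vertex $x$ with cycle-edges $e_1,e_2$ it enters $U_x$ from $w_{e_1}$, runs through a Hamilton path of $K_4$, and leaves to $w_{e_2}$, and analogously it crosses each $V_y$ by a Hamilton path of $K_6$ between the pairs of its two cycle-edges; an edge-vertex $w_e$ with $e\in C$ is then traversed once, bridging $U_x$ and $V_y$. The edge-vertices of the edges \emph{not} on $C$ still have to be visited: since $G$ is cubic and bipartite, each off-cycle edge has a unique $A$-endpoint and each $A$-vertex has exactly one off-cycle edge, so I absorb each off-cycle $w_e$ as a short detour inside the Hamilton path through the corresponding $U_x$ (possible because $w_e$ is adjacent to all of $U_x$). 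This yields a Hamilton cycle of $G^\ast$.

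The backward implication is the heart of the argument and the step I expect to be hardest. Given a Hamilton cycle $C^\ast$ of $G^\ast$, I classify each edge-vertex $w_e$, $e=xy$, by the position of its two $C^\ast$-neighbours — both in $U_x$, both in $V_y$, or one in each — and call $e$ \emph{selected} in the last case. Since each clique is joined to the rest of $G^\ast$ in $C^\ast$ only through its selected edge-vertices, and a clique traversed by part of a cycle leaves an even number of cycle-edges, a parity count forces every vertex of $G$ to meet an even number of selected edges, excluding one and three; the same connectivity remark excludes zero, because a clique meeting no selected edge would close $C^\ast$ off from the remainder of the graph. Hence every vertex of $G$ meets exactly two selected edges, so the selected edges form a $2$-regular spanning subgraph of $G$; contracting each clique together with the edge-vertices it absorbs carries the single cycle $C^\ast$ onto a single cycle whose edges are exactly the selected ones, so these form one Hamilton cycle of $G$. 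The main obstacle is precisely this case analysis: confirming that the deliberate $K_4$/$K_6$ asymmetry leaves no parasitic Hamilton cycle of $G^\ast$ that fails to project onto a Hamilton cycle of $G$.
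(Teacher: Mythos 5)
Your proposal is correct and is essentially the paper's own reduction: the same gadget (a clique $K_4$ for each $X$-vertex, a clique $K_6$ split into three fixed pairs for each $Y$-vertex, and an edge-vertex $w_e$ joined to all of $U_x$ and to the pair in $V_y$), the same verification that the result lies in $\mathcal{G}(6)$, and the same backward argument via ``crossed/selected'' edge-vertices, with the cut-set/parity count forcing exactly two selected edges at each vertex and the single cycle $C^\ast$ supplying connectivity. The only (immaterial) deviation is in the forward direction, where you absorb the off-cycle edge-vertices into the $K_4$ cliques while the paper detours through the corresponding pair inside the $K_6$ cliques; the final worry you raise about ``parasitic'' Hamilton cycles is already settled by the parity argument you give.
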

\begin{proof}
Akiyama, Nishizeki, and Saito~\cite{akiyama80} proved that
the problem of determinining whether there exists a Hamilton cycle in
a 2-connected, 3-regular, bipartite, planar graph is NP-complete.
Thus, to prove the theorem we only need to provide a polynomial-time reduction
of this problem to our problem.

Let $G$~be any 2-connected,
3-regular, bipartite, planar graph.
We will construct a graph~$G'\in\mathcal{G}(6)$ that is Hamiltonian if and only if $G$~is.
Let $(X,Y)$ be a bipartition of~$G$.
We set
\[V(G')=\bigcup_{x\in X}U_x \cup \bigcup_{y\in Y}V_y \cup \defset{w_e}{e\in E(G)},\]
where each $U_x$ is a set of four vertices and each $V_y$ is a set of six vertices
such that $U_x\cap V_y=\emptyset=U_x\cap U_z=V_y\cap V_s$ for all $x,z\in X$ and $y,s\in Y$.
The edges of~$G'$ are defined as follows:
\begin{itemize}
\item each of the sets $U_x$ and~$V_y$ induces a complete graph in~$G'$, for $x\in X$, $y\in Y$\!, and
\item if $e=xy$ is an edge in~$G$ with $x\in X$ and $y\in Y$, then the vertex~$w_e$ in~$G'$
is adjacent to all vertices in~$U_x$ and exactly two vertices in~$V_y$ with the additional requirement that
if $e'$ and~$e{''}$ are two other edges in~$G$ incident to~$y$, then every vertex in~$V_y$ is adjacent to
exactly one of the vertices $w_e, w_{e'}$, and~$w_{e{''}}$, see \cref{fig:reduction}.
\end{itemize}

\begin{figure}
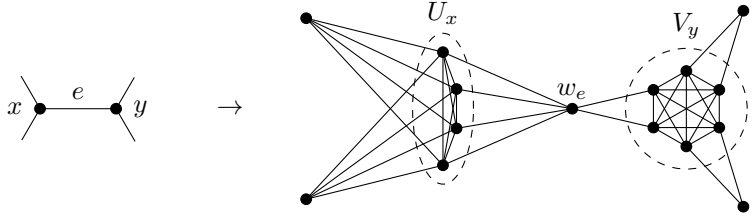

\centering
\figreduction
\caption{The transformation from $G$ to~$G'$.}
\label{fig:reduction}
\end{figure}

It is easy to verify that $G'\in\mathcal{G}(6)$.
It is also clear that $G'$ can be constructed from~$G$ in polynomial time.

Before we prove that $G'$ is Hamiltonian if and only if $G$~is,
we will introduce some practical notation.
Let $e=xy\in E(G)$.
In any Hamilton cycle of~$G'$, the vertex~$w_e$ is incident to exactly two edges of the cycle.
If one of these edges joins~$w_e$ to a vertex of~$U_x$
and the other edge joins~$w_e$ to a vertex of~$V_y$,
we say that $w_e$~is \emph{crossed}; otherwise it is not crossed.
If $e_1,e_2,e_3$ are three edges incident to a vertex in~$G$,
then $\set{w_{e_1},w_{e_2},w_{e_3}}$ is a cut set of~$G'$.
This means that in any Hamilton cycle of~$G'$,
exactly two of $w_{e_1},w_{e_2},w_{e_3}$ are crossed.

Suppose there is a Hamilton cycle~$C'$ in~$G'$.
Then we will construct in~$G$ a subgraph~$C$ by including an edge~$e\in E(G)$
in~$C$
if $w_e$ is crossed by~$C'$.
By the previous discussion,
$C$~will be a 2-regular spanning subgraph of~$G$.
Also, for any two vertices~$z_1$ and~$z_2$ in~$G$
we can find a path with edges~$e_1,\dotsc,e_p$
such that $w_{e_1},\dotsc,w_{e_p}$~are crossed by~$C'$,
since $C'$~is a Hamilton cycle in~$G'$.
Thus $C$ is connected,
so it will form a Hamilton cycle of~$G$.

Suppose now that $G$~is Hamiltonian,
and let
$\overrightarrow C$ denote a Hamilton cycle in~$G$ with a given orientation.
Consider an edge $e=xy$ in $\overrightarrow C$
and assume that the edge preceding~$x$ in the cycle is~$d$,
that the edge succeding~$y$ is~$f$,
and that the other two edges incident to~$x$ and~$y$ are $a$ and~$b$, respectively.
It is not hard to find a path from $w_d$ to~$w_f$ in~$G'$
that covers all vertices of $U_x\cup V_y\cup\set{w_d,w_e,w_f,w_b}$,
see \cref{fig:hamreduction}.
By representing the Hamilton cycle $\overrightarrow C$ as
$\overrightarrow C=x_1y_1x_2y_2\cdots x_ny_nx_1$
and using this method for every pair $x_ky_k$,
we can construct a Hamilton cycle of~$G'$.

\begin{figure}
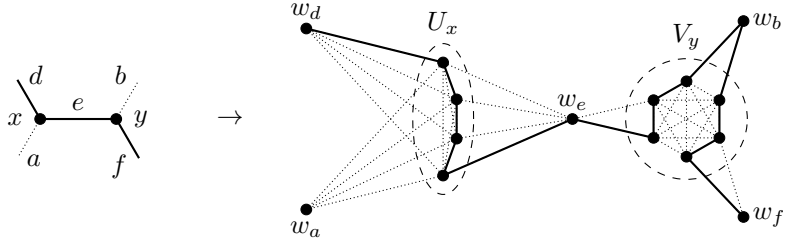

\centering
\fighamreduction
\caption{The transformation from a Hamilton cycle in~$G$ to one in~$G'$.}
\label{fig:hamreduction}
\end{figure}

We can conclude that this construction gives us a polynomial time reduction of
the Hamilton cycle problem for 2-connected cubic bipartite planar graphs
to the Hamilton cycle problem for graphs in $\mathcal{G}(6)$,
and thus that the latter problem is NP-complete.
\end{proof}

By \cref{pippert74,irzhav}, all connected, locally connected, $k$-regular graphs are Hamiltonian if $k\leq 5$.
We now formulate a result describing a new class of Hamiltonian, locally connected, $k$-regular graphs
with $k\geq 6$.

\begin{theorem}
\label{like:bondy}
Every locally connected graph~$G$ in $\mathcal{G}(k)$, $k\geq 6$, is Hamiltonian.
Moreover, for every non-Hamiltonian cycle~$C$ in~$G$ there exists a cycle of length
$|V(C)|+\ell$ in~$G$, $\ell\in\set{1,2}$, containing the vertices of~$C$.
\end{theorem}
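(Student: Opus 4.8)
The plan is to establish the stronger \emph{extension} statement and deduce Hamiltonicity from it. Since $G$~is locally connected, each neighbourhood induces a connected graph on $k\ge6$ vertices and hence contains an edge, so $G$~has a triangle and therefore a longest cycle~$C$. If every non-Hamiltonian cycle could be extended to a strictly longer cycle on a superset of its vertex set, then by maximality the longest cycle~$C$ could not be non-Hamiltonian, and Hamiltonicity would follow at once. So it suffices to show that every non-Hamiltonian cycle~$C$ can be extended by $\ell\in\set{1,2}$ vertices while retaining all of~$V(C)$.

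To begin the extension, I would first locate a triangle straddling~$C$. Because $G$~is connected and $C$~is not spanning, some vertex of~$C$ has a neighbour off~$C$, so \cref{triangle} yields vertices $w,z\in V(C)$ and $v\in N(w)\setminus V(C)$ with $wz,vz\in E(G)$; thus $wvz$ is a triangle with $v$~off the cycle. Orient~$C$. If $v$~has two neighbours that are consecutive on~$C$ — in particular if $z\in\set{w^+,w^-}$ — then I replace that cycle edge by the length-two path through~$v$, inserting~$v$ and producing a cycle on $V(C)\cup\set{v}$; this disposes of the case $\ell=1$. It therefore remains to treat the situation in which no two neighbours of~$v$ on~$C$ are consecutive; in particular $w^+,w^-,z^+,z^-\notin N(v)$, and each of these four vertices lies in $N_2(v)$.

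For this remaining case the goal is to insert one or two off-cycle vertices into a single edge of~$C$. Fixing the cycle edge $w\,w^+$, I would search for either an off-cycle common neighbour~$x$ of $w$ and~$w^+$, which can be inserted between them to give $\ell=1$, or an off-cycle neighbour~$v'$ of~$v$ with $v'\in N(w^+)$, in which case the path $w\,v\,v'\,w^+$ replaces the edge $w\,w^+$ and gives $\ell=2$. Local connectedness supplies candidates of the first kind: since $N(w)$ is connected it contains a path from~$v$ to~$w^+$, and the last vertex of that path before~$w^+$ is a common neighbour of $w$ and~$w^+$; if it lies off~$C$ we are finished with $\ell=1$. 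The hard part — and the main obstacle — is to show that when every common neighbour of $w$ and~$w^+$ (and, symmetrically, of $z$ and~$z^-$) lies on~$C$, an off-cycle vertex~$v'$ of the second kind must exist, so that an $\ell=2$ extension is available. This is where membership in $\mathcal{G}(k)$ enters decisively: through \cref{lemma1} the bound $\card{N_2(u)}\le k$ forces the neighbourhoods of the chord endpoints and of~$v$ to overlap heavily, and I expect this forced overlap to be incompatible with the simultaneous absence of all insertible configurations. Carrying out this counting — while handling the degenerate subcases where the arc between~$w$ and~$z$ is short or where the relevant second neighbourhoods collapse onto one another — is the delicate and decisive step of the proof.
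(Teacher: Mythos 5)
Your setup matches the paper's opening moves (longest-cycle framing, \cref{triangle} to get a triangle $wvz$ with $v$ off~$C$, the easy $\ell=1$ insertion when $v$ has two consecutive neighbours on~$C$), but the proposal stops exactly where the real work begins, and the step you defer is the entire content of the theorem. You write that the bound $\card{N_2(u)}\le k$ ``forces the neighbourhoods \dots to overlap heavily, and I expect this forced overlap to be incompatible with the simultaneous absence of all insertible configurations'' --- this is a statement of hope, not an argument. The paper's proof of that incompatibility is a multi-stage structure: assuming no cycle of length $n+1$ or $n+2$ contains $V(C)$, it first uses \cref{lemma1}(iv) to show $\card{N(v)\cap N(w_1^+)}\ge 2$; then an edge-counting argument between $W=N(v)\cap V(C)$ and $W^+=\set{w_1^+,\dotsc,w_p^+}$ forces $M_2(w_i)\setminus(N(v)\cup N(w_i^+)\cup\set{v})=N(w_i)\cap W^+$ for every~$i$, from which it deduces that $v$ is adjacent to every second vertex of~$C$ (so $n=2p$); then it shows no $w_i^+$ has a neighbour off~$C$; and finally regularity gives $N(v)=W$, $p=k$, $N(w_i^+)=W$, hence $d(w_1)>k$, a contradiction. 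None of this counting appears in your proposal.

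There is also a structural mismatch that would hurt you if you tried to complete the plan as stated: you restrict the search to extensions obtained by inserting one or two off-cycle vertices into a \emph{single} edge of~$C$ (an off-cycle common neighbour of $w,w^+$, or a path $w\,v\,v'\,w^+$ replacing $ww^+$). The extensions the paper actually constructs are not of this form: the $(n+1)$-cycles are, e.g., $w_1vz\overleftarrow Cw_1^+z^+\overrightarrow Cw_1$ and $w_1vw_iw_1^+\overrightarrow Cw_i^-w_i^+\overrightarrow Cw_1$, which insert $v$ while simultaneously re-routing~$C$ along chords such as $w_1^+z^+$ or $w_i^-w_i^+$ and reversing an arc; the $(n+2)$-cycle uses a second off-cycle vertex $u$ attached at $W^+$ in a similarly non-local way. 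So even granting your counting heuristic, the class of ``insertible configurations'' you propose to rule out absent is too narrow to yield the contradiction; the argument needs the global structure (adjacency of $v$ to every second vertex of~$C$ and the final degree bound), not just local insertions at one edge. As it stands the proposal has a genuine gap at its decisive step.
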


\begin{proof}
Since $G$ is locally connected, every vertex of~$G$ lies on a triangle. Suppose that for some $n<|V(G)|$, $G$~has a cycle~$C$ of length~$n$ but
\begin{equation}
\label{E3}
\text{$G$ has no cycle of length $n+1$ or $n+2$ containing the vertices of~$C$.}
\end{equation}
Consider a vertex~$w_1$ on~$C$ with $N(w_1)\setminus V(C)\not=\emptyset$.
Such a vertex exists because $G$~is connected.
By \cref{triangle}, there exists a pair of vertices $v,z$
such that $v\in N(w_1)\setminus V(C)$, $z\in V(C)$ and $w_1z, vz\in E(G)$.

Set $W=N(v)\cap V(C)=\{w_1,\dots,w_p\}$, $p\geq 2$, where the vertices $w_1,\dots,w_p$ occur on
$\overrightarrow C$ in the order of their indices.
(We consider $w_{p+1}=w_1$.)

\begin{claim}
\label{snitt2}
$|N(v)\cap N(w_1^+)|\geq 2$.
\end{claim}

\begin{proof}
Suppose that $N(v)\cap N(w_1^+)=\{w_1\}.$
By \cref{E3}, $vw_1^+\notin E(G)$ and, by \cref{lemma1},
$|N(v)\cap N(w_1^+)|\geq |M_2(w_1)\setminus (N(v)\cup N(w_1^+))|-1.$
Then $N(v)\cap N(w_1^+)=\{w_1\}$ implies
$M_2(w_1)\setminus (N(v)\cup N(w_1^+))=\{v,w_1^+\}$.
This means that $w_1^+$ is adjacent to every vertex in $M_2(w_1)\setminus (M_1(v)\cup \{w_1^+\})$.
By \cref{E3}, the vertex~$v$ is not adjacent to~$z^+$.
Then $z^+$ belongs to the set $M_2(w_1)\setminus (M_1(v)\cup \{w_1^+\})$ and, therefore, $w_1^+z^+\in E(G)$.
But then $G$ has the cycle $w_1vz\overleftarrow Cw_1^+z^+\overrightarrow Cw_1$ of length $n+1$
containing $V(C)$, a contradiction.
\end{proof}

\begin{claim}
$w_i^+=w^-_{i+1}$ for each $i=1,\dotsc,p$, that is, $n=2p$ and $v$~is adjacent to every second vertex of~$C$.
\end{claim}
\begin{proof}
Set $W^+=\{w_1^+,\dotsc,w_p^+\}$.
We will count the number of edges between $W^+$ and~$W$ which we denote by $e(W^+,W)$.
By the assumption \cref{E3}, the set $W^+\cup\{v\}$ is
independent and $N(w_i^+)\cap N(v)\cap (V(G)\setminus V(C))=\emptyset$, for $1\leq i\leq p$.
Moreover, for each
$i=1,\dots,p$, we have
$d(v,w^+_i)=2$ and $w_i\in N(v)\cap N(w_i^+)$, so by the hypothesis of this theorem and by \cref{lemma1},
\begin{equation}
|N(v)\cap N(w_i^+)|\geq |M_2(w_i)\setminus (N(v)\cup N(w_i^+)|-1.
\label{E2}
\end{equation}
Obviously,
\begin{equation}
N(w_i)\cap W^+\subseteq M_2(w_i)\setminus (N(v)\cup N(w_i^+)\cup \{v\}).
\end{equation}
Thus,
$|N(w_i)\cap W^+|\leq |M_2(w_i)\setminus (N(v)\cup N(w_i^+))|-1.$
This and \cref{E2} imply that
$|N(w_i)\cap W^+|\leq |N(v)\cap N(w_i^+)|.$
Hence,
\begin{equation}
e(W^+, W)=\sum_{i=1}^k|N(w_i)\cap W^+|\leq \sum_{i=1}^k|N(v)\cap N(w_i^+)|=e(W^+,W).
\end{equation}
It follows, for each $i=1,\dots,p$, that
\begin{equation}
M_2(w_i)\setminus (N(v)\cup N(w_i^+)\cup \{v\})=N(w_i)\cap W^+\subseteq W^+.
\label{E8}
\end{equation}

Noting that $p\geq2$ and, by \cref{snitt2},
$|N(w_1^+)\cap N(v)|\geq2$, we now prove by contradiction that
$w_i^+=w^-_{i+1}$ for each $i=1,\dotsc,p$.

Assume without loss of generality that $w_1^+\neq w_2^-$, whence
$w_2^-\notin W^+$. Observe that $w_2^-\in N(w_2^+)$, because otherwise, by \cref{E8}, $w_2^-\in W^+$.
Clearly, the assumption~\cref{E3} implies $w^-_2w^-_3 \notin E(G)$.
Hence $w_2^+\neq w_3^-$.
Repetition of this argument shows that $w_i^+\neq w^-_{i+1}$ and
$w_i^+w_i^-\in E(G)$ for all $i\in\{\,1,\ldots,p\,\}$.
By~\cref{snitt2}, $N(w_1^+)\cap N(v)$ contains a vertex $x\neq w_1$.
Clearly, $x\in V(C)$, because otherwise $G$ contains a cycle of length $n+2$ containing the vertices of~$C$.
Then $x=w_i$ for some $i\geq 2$ and $G$~contains
the cycle $w_1vw_iw_1^+\overrightarrow Cw_i^-w_i^+\overrightarrow Cw_1$ of length~$n+1$
containing $V(C)$.
This contradiction proves~that $w_i^+=w^-_{i+1}$ for each $i=1,\dotsc,p$, and $n=2p$.
\end{proof}

We continue to prove the theorem. First we will show that the vertex~$w_i^+$ has no neighbor outside
the cycle~$C$, for each $i=1,\dotsc,p$.
Suppose that the condition $N(w_t^+)\setminus V(C)\ne\emptyset$ holds for some~$t$.
Since the vertex~$w_1$ was picked arbitrarily in $V(C)$, we can conclude, using similar arguments, that
$w_t^+$ has a neighbor $u\in N(w_t^+)\setminus V(C)$ that is adjacent to every second vertex of~$C$ as well. More precisely, $N(u)\cap V(C)=W^+$.
But then $G$ has the cycle $w_1vw_2w_1^+uw_2^+\overrightarrow Cw_1$ of length $n+2$
containing $V(C)$.
This contradiction implies that $N(w_i^+)\subseteq W$ and $d(w_i^+)\leq p$,
for each $i=1,\ldots,p$.
On the other hand, $W\subseteq N(v)$, that is, $d(v)\geq p$.
These condition and $k$-regularity of~$G$ imply that $N(v)=W$,
$d(v)=p=k$ and $N(w_i^+)=W$, for each $i=1,\dotsc,p$.
Then $w_1$ is adjacent to each $w_i^+$, $i=1,\dotsc,p$,
as well as to~$v$.
But this
implies that $d(w_1)>p$, a contradiction.

Thus $G$ has a cycle of length $n+1$ or $n+2$ containing $V(C)$.
This implies that $G$ will also have a Hamilton cycle,
which completes the proof of the theorem.
\end{proof}

We continue with two results concerning the graphs in $\mathcal{G}(k)$.

\begin{proposition}
\label{loc:connected}
For every $k\geq 30$,
the set $\mathcal{G}(k)$ contains an infinite class of locally connected graphs
that are not claw-free.
\end{proposition}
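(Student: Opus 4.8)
The plan is to produce the family directly as a cyclic blow-up. Fix a $\rho$-regular graph $H$ on $d$ vertices, and for each $n\geq5$ let $G_n$ be obtained from the cycle $C_n$ by replacing its vertices with disjoint copies $B_1,\dots,B_n$ of $H$ and joining every vertex of $B_i$ to every vertex of $B_{i+1}$ (indices taken modulo $n$). Every vertex then has degree $\rho+2d$, so I set $k=\rho+2d$ and will choose $H$, $d$, $\rho$ below to match the prescribed value. Each $G_n$ is connected and $k$-regular with $\card{V(G_n)}=nd$, so $\defset{G_n}{n\geq5}$ is automatically an infinite class of pairwise non-isomorphic $k$-regular graphs once the remaining three properties are verified.

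First I would check that each $G_n$ is not claw-free, which is where a non-edge in the bead is needed. Take a vertex $u\in B_i$, a non-adjacent pair $p,q$ in $B_{i-1}$ (such a pair exists as soon as $H\neq K_d$), and any $w\in B_{i+1}$. Since consecutive beads are completely joined, $u$ is adjacent to each of $p,q,w$; since $pq$ is a non-edge of $H$ and since $B_{i-1}$ and $B_{i+1}$ lie at cyclic distance $2$ in $C_n$ and hence are not joined (here $n\geq5$), the set $\set{p,q,w}$ is independent. Thus $\set{u,p,q,w}$ induces a claw. For local connectedness I would show that $G[N(u)]$ is connected whenever $H$ has an edge: writing $N(u)=N_H(u)\cup B_{i-1}\cup B_{i+1}$, any fixed $h\in N_H(u)$ is adjacent to all of $B_{i-1}\cup B_{i+1}$, and every other vertex of $N(u)$ reaches $h$ in at most two steps through $B_{i-1}$, so $h$ is a hub and $G[N(u)]$ is connected (this needs no assumption on the connectivity of $H$ itself).

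The delicate point, and the source of a lower bound on $k$, is membership in $\mathcal{G}(k)$, i.e.\ the requirement $\card{N_2(u)}\leq k$. I would compute $N_2(u)$ exactly: for $u\in B_i$ the vertices at distance $2$ are precisely the non-neighbours of $u$ inside its own bead together with the two beads $B_{i-2}$ and $B_{i+2}$, giving $\card{N_2(u)}=(d-1-\rho)+2d=3d-1-\rho$. Hence $\card{N_2(u)}\leq k=\rho+2d$ is equivalent to $\rho\geq\tfrac{d-1}{2}$. Combining this with the condition $\rho\leq d-2$ needed for a non-edge, a usable bead exists precisely when $\tfrac{d-1}{2}\leq\rho\leq d-2$, an interval that is nonempty for every $d\geq3$ (and then $\rho\geq1$ automatically, so $H$ has an edge as well).

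It remains to realise the prescribed $k$. Writing $k=\rho+2d$, the inequalities $\tfrac{d-1}{2}\leq\rho\leq d-2$ translate into $\tfrac{k+2}{3}\leq d\leq\tfrac{2k+1}{5}$, an interval of length $\tfrac{k-7}{15}$; a short check shows that for every $k\geq30$ it contains an integer $d$ for which $\rho=k-2d$ has the parity making $\rho d$ even, so that a $\rho$-regular graph $H$ on $d$ vertices exists. Feeding this $H$ into the blow-up yields, for each $n\geq5$, a locally connected, non-claw-free graph in $\mathcal{G}(k)$, and letting $n\to\infty$ gives the desired infinite class. I expect the genuine obstacle to be exactly this balancing act: the claw forces a non-edge in the bead (pushing it toward sparseness), while the second-neighbourhood bound forces the bead to be at least half-complete (pushing it toward density), and one must confirm that the surviving window $\tfrac{d-1}{2}\leq\rho\leq d-2$ still covers every value of $k\geq30$.
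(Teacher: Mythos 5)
Your construction is correct and is essentially the paper's own: the paper likewise takes a cyclic blow-up in which consecutive beads are completely joined and each bead is a dense regular graph (there the bead is $K_{2p}$ with $p-1-t$ colour classes of a proper edge colouring deleted, i.e.\ bead size $d=2p$ and inner degree $\rho=p+t$ with $k=5p+t$, $p\ge6$, $0\le t\le4$, which lies inside your window $\tfrac{d-1}{2}\le\rho\le d-2$), and the verifications of regularity, local connectedness, the induced claw, and $\card{N_2(u)}\le k$ are the same. The only difference is bookkeeping: the paper's parameterisation $k=5p+t$ avoids your interval-plus-parity argument, which is nevertheless valid (for even $k$ any integer $d$ in $[\tfrac{k+2}{3},\tfrac{2k+1}{5}]$ works, for odd $k\ge37$ the interval has length at least $2$ and so contains an even $d$, and the cases $k\in\{31,33,35\}$ are checked directly).
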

\begin{proof}
Let $k$ be an integer, $k\geq 30$. Then $k=5p+t$ for some $p\geq 6$ and $0\leq t\leq 4$.
We will show that for any $n\geq 2$, the set $\mathcal{G}(k)$
contains a locally connected $k$-regular graph of diameter~$n$ which is not claw-free.

Consider first a graph $D(n,p)$ which is defined as follows:
its vertex set is $ V_1\cup\dotsb\cup V_{2n}$,
where $V_1,\dotsc,V_{2n}$ are pairwise disjoint sets of cardinality $2p$
and
two vertices in $V_1\cup\dotsb\cup V_{2n}$
are adjacent if and only if
they both belong to $V_1\cup V_{2n}$ or to $V_i\cup V_{i+1}$
for some $i\in\{1,\dotsc,2n-1\}$.
Denote by $G_i$ the subgraph of $D(n,p)$ induced by the set~$V_i$.
Clearly, $G_i$~is a complete graph on $2p$ vertices
and therefore admits a proper edge coloring with $2p-1$ colors, $i=1,\dots,2n$.

Now from the graph $D(n,p)$ where the edges of the subgraphs
$G_1,\dots,G_{2n}$ are properly colored with $2p-1$ colors, delete all edges having the first $p-1-t$ colors
(note that the conditions $p\ge 6$ and $0\leq t\leq 4$ imply that $p-t-1\geq 1$).
It is not difficult to verify that the resulting graph is ($5p+t$)-regular, locally connected and is not claw-free.
Finally, the resulting graph belongs to the set $\mathcal{G}(k)$ because every vertex of this graph has exactly $5p-t-1$ vertices
at distance~2 and this number does not exceed $k=5p+t$.
\end{proof}

\begin{remark}
Using the same argument as above, one can additionally show that \cref{loc:connected} holds for any
$k\in\set{10,15,16,20,21,22,25,26,27,28}$.
\end{remark}

\begin{proposition}
\label{large-claw-free}
For every integer $k=4n$, $n\geq2$, the set $\mathcal{G}(k)$ contains an infinite class of graphs
that satisfy the conditions of \cref{asratian21}
but are not locally connected or claw-free.
\end{proposition}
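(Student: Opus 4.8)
The plan is to exhibit, for every $n\ge 2$, an infinite family of $4n$-regular graphs $\Gamma_m$ (indexed by a length parameter $m$) lying in $\mathcal{G}(4n)$, each of which is neither locally connected nor claw-free, yet has every ball of radius~$2$ equal to a $2$-connected graph; by \cref{asratian21} each such $\Gamma_m$ is then Hamiltonian, so the family also lies inside the graphs governed by that theorem. Because the degree $4n$ is fixed, an infinite family forces $\card{V(\Gamma_m)}$, and hence $\diam(\Gamma_m)$, to grow with~$m$, so the graphs must be long and thin. The starting observation is that $K_{4n,4n}$ already realises all four local requirements: every vertex has an independent neighbourhood (so the graph is neither locally connected nor claw-free), $\card{N_2(u)}=4n-1\le 4n$, and, having diameter~$2$, its only ball of radius~$2$ is the whole (plainly $2$-connected) graph. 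This is a single graph, however; the work is to chain infinitely many such blocks while preserving the bound $\card{N_2(u)}\le 4n$ at every vertex, which by \cref{lemma1} is exactly the condition for membership in $\mathcal{G}(4n)$.

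The key difficulty, and the reason the blocks cannot merely be linked by sparse bridges, is the following. If a vertex $u$ is joined to a separate block by a single edge $uv$, then in a $4n$-regular graph the remaining $4n-1$ neighbours of~$v$ all lie at distance~$2$ from~$u$; together with the distance-$2$ vertices of~$u$ inside its own block this makes $\card{N_2(u)}$ far exceed~$4n$. Consecutive blocks must therefore overlap \emph{thickly}: the vertices through which one block attaches to the next have to share essentially all of their second neighbourhood, just as the two sides of $K_{4n,4n}$ do. I would accordingly arrange modified copies of $K_{4n,4n}$ (or of the modification of $K_{k,k}$ described in \cref{sec:defs}) into a cyclic band of $m$ blocks, gluing each block to the next along a complete bipartite overlap so that the attachment vertices of adjacent blocks have a common set of distance-$2$ vertices. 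The value $k=4n$ is what makes the degrees on the two sides of such an overlap match; this is the same phenomenon that makes the blow-up of $C_4$ with independent parts of size $2n$ coincide with $K_{4n,4n}$, namely that $C_4$ is the unique cycle whose second neighbourhoods loop back onto a single opposite part rather than spreading outward.

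With the band $\Gamma_m$ in hand I would verify the four claims in turn. Membership in $\mathcal{G}(4n)$ reduces, via \cref{lemma1}, to checking $\card{N_2(u)}\le 4n$ for the finitely many vertex types, namely the interior-of-block vertices and the $O(1)$ seam vertices: the interior vertices inherit the bound $4n-1$ from $K_{4n,4n}$, while the seam vertices are where the thick overlap must be calibrated to keep the count at most~$4n$. That $\Gamma_m$ is not locally connected and not claw-free is immediate from any interior block, whose vertices retain an independent neighbourhood of size at least~$3$. For $2$-connectivity of the balls of radius~$2$, I note that since the band is long each such ball is contained in at most two consecutive blocks together with their overlaps, so it suffices to establish $2$-connectivity for these bounded, explicitly described subgraphs, after which \cref{proposition:ball} may be invoked to transfer $2$-connectivity to all larger balls. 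Letting $m\to\infty$ then yields infinitely many pairwise non-isomorphic graphs, none claw-free or locally connected, which is the assertion.

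The main obstacle is the simultaneous control at the seams. A disconnected neighbourhood and an induced claw both pull the local structure toward independent, complete-bipartite neighbourhoods, which in a long graph tends to push second neighbourhoods outward and violate $\card{N_2}\le 4n$, while $4n$-regularity forbids repairing this with sparse links. Making the overlap thick enough to hold $\card{N_2}\le 4n$ and to keep every radius-$2$ ball $2$-connected, yet loose enough that two blocks do not collapse into one and the diameter still grows with~$m$, is the delicate balance; this is precisely what the value $k=4n$, together with the $K_{4n,4n}$ loop-back, is engineered to achieve, and explains why the statement is confined to these~$k$. It also complements \cref{corollary:nonclaw}, which produces non-claw-free, non-locally-connected graphs in $\mathcal{G}(4)$ whose radius-$2$ balls are \emph{not} $2$-connected; here the overlaps are chosen specifically so that they are.
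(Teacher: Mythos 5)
Your write-up identifies the right difficulty but never resolves it: no graph is actually constructed. Everything hinges on the sentence ``the seam vertices are where the thick overlap must be calibrated to keep the count at most $4n$,'' and that calibration is the entire content of the proposition. Worse, the building block you commit to works against you. If $u$ is a vertex of a $4n$-regular graph in $\mathcal{G}(4n)$ whose neighbourhood is independent --- as it is for every interior vertex of a $K_{4n,4n}$ block --- then each $v\in N(u)$ has all of its $4n-1$ remaining neighbours inside $N_2(u)$, so $e(N(u),N_2(u))=4n(4n-1)$, while the total degree available in $N_2(u)$ is $4n\,|N_2(u)|\le 16n^2$. Hence at most $4n$ edge-endpoints of $N_2(u)$ are left over, and therefore at most $4n$ edges leave the set $M_2(u)$, which has at most $8n+1$ vertices. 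In other words, a block whose interior genuinely ``inherits the bound $4n-1$ from $K_{4n,4n}$'' is attached to the rest of the graph by a sparse bridge of at most $4n$ edges --- exactly the configuration your own second paragraph correctly rules out as fatal. The thick complete-bipartite overlaps you postulate are therefore incompatible with keeping the interior second neighbourhoods at $4n-1$, and the plan as stated cannot be completed.

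The paper goes the opposite way: the generic block is a clique, not a bipartite graph. It takes a cyclic chain of cliques $V_1,\dots,V_{3p}$ of sizes $2n,2n,2$ followed by $p-1$ repetitions of $2n,2n-1,2$, joins consecutive sets completely, and deletes a perfect matching inside each of $V_1$, $V_2$, $V_3$ to restore $4n$-regularity (this is where $k\equiv 0\pmod 4$ enters, since $|V_1|=k/2$ must be even to admit a perfect matching). Almost every vertex then has a connected, clique-dominated neighbourhood, which is what keeps $|N_2(u)|\le 4n$ and the radius-$2$ balls $2$-connected over an arbitrarily long band; the failure of local connectedness and of claw-freeness is manufactured only at the single seam around $V_2\cup V_3\cup V_4$, where the deleted matching edges supply the nonadjacent pairs of a claw and disconnect the neighbourhood of a vertex of $V_3$. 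You need only one witness to each negative property, not one in every block; insisting on independent neighbourhoods throughout is what makes your version unworkable.
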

\begin{proof}
Let $k=4n$, for some $n\geq 2$. In order to prove the proposition, we will show that for any integer $p\geq 2$
there is a graph of diameter $\floor[\big]{\tfrac{3p}2}$
which satisfies the conditions of \cref{asratian21} but is not locally connected or claw-free.

First consider a graph $Q(4n,p)$ which is defined as follows:
\begin{enumerate}
\item
its vertex set is $ V_1\cup\dotsb\cup V_{3p}$,
where $V_1,\dotsc,V_{3p}$ are pairwise disjoint sets of cardinality
$|V_1|=|V_2|=2n$, $|V_3|=2$ and $|V_{3i+1}|=2n$, $|V_{3i+2}|=2n-1$, $|V_{3i+3}|=2$, for
$i=1,\dots,p-1$,
\item
two vertices in $ V_1\cup\dotsb\cup V_{3p}$
are adjacent if and only if
they both belong to $V_1\cup V_{3p}$ or to $V_i\cup V_{i+1}$
for some $i\in\{1,\dotsc,3p-1\}$.
\end{enumerate}
Let $M_j$ be a perfect matching in the subgraph induced by the set~$V_j$, $j=1,2,3$.
Then it is not difficult to verify that the graph
$Q(4n,p)-(M_1\cup M_2\cup M_3)$ is a $4n$-regular graph of diameter~$\floor[\big]{\tfrac{3p}2}$
which satisfies the conditions of \cref{asratian21} but is not locally connected or claw-free.
The resulting graph for $n=2$ and $p=2$
can be seen in \cref{fig:claw-free}.
\end{proof}

\begin{figure}
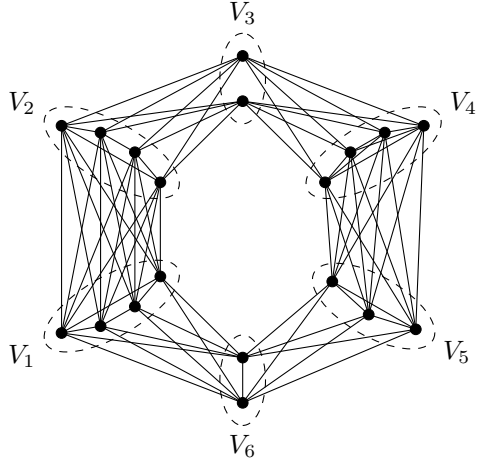

\centering
\figclawfreeexample
\caption{A Hamiltonian graph from $\mathcal{G}(8)$ with 2-connected balls of radius~2 that is not locally connected and not claw-free}
\label{fig:claw-free}
\end{figure}

\Cref{large-claw-free} shows that \cref{asratian21} does not follow
from any known result
on Hamiltonicity of claw-free or locally connected graphs.

The following property was proved in \cite{gordon11}:
\begin{proposition}[Gordon, Orlovich, Potts, and Strusevich~\cite{gordon11}]
\label{gordon11}
Let $G$ be a connected $k$-regular graph with $k\geq 9$ where
every edge belongs to at least $k-4$ triangles. Then $G$ is
\emph{fully cycle extendable}, that is, every vertex of~$G$ belongs to a triangle and for every
non-Hamiltonian cycle~$C$ in~$G$ there exists a cycle~$C'$ of~$G$ such that
$V(C)\subset V(C')$ and $|V(C')|=|V(C)|+1$.
\end{proposition}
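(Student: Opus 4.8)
Although \cref{gordon11} is quoted from~\cite{gordon11} rather than reproved here, here is how I would approach it. The hypothesis is purely local—$G$ is $k$-regular and every edge lies on at least $k-4$ triangles—so the first task is to turn it into usable structure. For any edge $uv$ the $k-4$ common triangles give $\card{N(u)\cap N(v)}\ge k-4$, so each endpoint has at most three neighbours outside the closed neighbourhood of the other, and the subgraph induced by $N(u)$ has minimum degree at least $k-4$. Since $k-4\ge(k-1)/2$ for $k\ge 7$, that subgraph is connected, so $G$ is in fact \emph{locally connected} and \cref{triangle} becomes available. The first half of the conclusion is then immediate: every vertex is incident to an edge, and $k-4\ge 5>0$ forces each edge onto a triangle, so every vertex lies on a triangle.

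For the extension statement I would isolate two moves. \emph{Simple insertion}: if some $w\notin V(C)$ is adjacent to two consecutive vertices $v_i,v_i^+$ of~$C$, then replacing the edge $v_iv_i^+$ by the path $v_iwv_i^+$ produces a cycle of length $\card{V(C)}+1$ through $V(C)$. \emph{Crossing insertion}: if $w\notin V(C)$ has neighbours $v_i,v_j$ on~$C$ with $v_i^+v_j^+\in E(G)$, then $w\,v_i\overleftarrow C v_j^+v_i^+\overrightarrow C v_j\,w$ is such a cycle (the two arcs partition $V(C)$). The plan is to assume for contradiction that a non-Hamiltonian cycle~$C$ cannot be extended to a cycle of length $\card{V(C)}+1$ containing $V(C)$, and to pick, by connectedness, a vertex $w\notin V(C)$ with a neighbour on~$C$. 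Failure of simple insertion forces the neighbours of~$w$ on~$C$ to be pairwise non-consecutive; failure of crossing insertion forces the set $W^+$ of their successors to be independent.

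The heart of the argument is then a counting step that is exactly where $k\ge 9$ enters. Fix a neighbour $v=v_i$ of~$w$ on~$C$ and consider the edge $vv^+$. Both $N(w)\cap N(v)$ and $N(v)\cap N(v^+)$ lie inside $N(v)$ and each has size at least $k-4$, so their intersection has size at least $(k-4)+(k-4)-k=k-8\ge 1$. A vertex in this intersection is adjacent to $w$, to $v$, and to $v^+$; if it lies off~$C$, simple insertion at the consecutive pair $v,v^+$ gives a contradiction. I would then run the same computation at the edge $v^-v$, and more generally propagate the forced on-cycle adjacencies so as to accumulate many pairwise non-consecutive neighbours of~$w$ on~$C$ whose successors stay independent, finally contradicting either the $k$-regularity of a vertex of~$W^+$ or the non-consecutiveness constraint. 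I expect the main obstacle to be precisely this last step: ruling out the persistent configuration in which every common neighbour produced above is trapped on~$C$ while $W^+$ remains independent. The reductions before it are routine, but converting the local density into a genuine one-vertex extension requires careful bookkeeping that this configuration resists, and it is there that the threshold $k\ge 9$ (hence $k-8\ge 1$) is indispensable.
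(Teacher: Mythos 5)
First, a point of comparison: the paper does not prove \cref{gordon11} at all---it is imported from Gordon, Orlovich, Potts, and Strusevich~\cite{gordon11} and used as a black box---so there is no in-paper proof to measure your sketch against; what can be judged is whether your outline amounts to a proof. Your preliminary reductions are correct: for every edge $uv$ one has $\card{N(u)\cap N(v)}\ge k-4$, so the subgraph induced by $N(u)$ has minimum degree at least $k-4\ge (k-1)/2$ for $k\ge 7$ and is therefore connected, whence $G$ is locally connected and every vertex lies on a triangle; the two insertion moves are the standard ones; and the count $\card{N(w)\cap N(v)\cap N(v^+)}\ge 2(k-4)-k=k-8\ge 1$ is where $k\ge 9$ enters, exactly as you say.

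The gap is precisely where you locate it, and it is a genuine missing idea rather than bookkeeping. When the guaranteed common neighbour of $w$, $v=v_i$ and $v_i^+$ lies on~$C$, all you obtain is a neighbour $v_j$ of~$w$ with $v_jv_i^+\in E(G)$; unlike the crossing configuration $v_i^+v_j^+\in E(G)$, this adjacency alone does not produce a cycle of length $\card{V(C)}+1$ through $V(C)$: inserting $w$ between $v_i$ and $v_j$ forces the deletion of both cycle edges at $v_j$ and one at $v_i$, and the resulting spanning path can only be closed if, in addition, $v_j^-v_j^+\in E(G)$ (compare the proof of \cref{like:bondy}, where exactly such edges $w_i^-w_i^+$ have to be established before the adjacency $w_iw_1^+$ can be exploited). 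Your plan to ``propagate'' these adjacencies and ``accumulate'' non-consecutive neighbours with independent successors names the difficulty but supplies no growing invariant, no bound that is eventually violated, and no reason the persistent configuration you describe cannot simply occur. Note also that the paper's own machinery cannot close the gap: by \cref{cycle:extend} such a graph lies in $\mathcal{G}(k)$ and by your first paragraph it is locally connected, so \cref{like:bondy} yields only an extension by $1$ or $2$ vertices, which is strictly weaker than full cycle extendability---this is exactly why the paper ends with a conjecture. As it stands, your proposal is an honest partial sketch whose decisive step would have to be reconstructed from the original argument in~\cite{gordon11}.
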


We will show now that all graphs satisfying the conditions of \cref{gordon11} belong to the set $\mathcal{G}(k)$.

\begin{proposition}
\label{cycle:extend}
Let $G$ be a connected $k$-regular graph with $k\geq 9$ where every edge
belongs to at least $k-4$ triangles. Then $G\in \mathcal{G}(k)$.
\end{proposition}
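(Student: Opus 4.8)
The plan is to verify the defining inequality of $\mathcal{G}(k)$ directly: by the definition of $\mathcal{G}(k)$ it suffices to prove that $\card{N_2(u)}\le k$ for every vertex $u\in V(G)$. First I would fix such a $u$, write $A=N(u)$ and $B=N_2(u)$ (so $\card{A}=k$), and then estimate the number $e(A,B)$ of edges between $A$ and $B$ from both sides, obtaining the conclusion by comparing the two estimates.

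For the upper bound, consider any $v\in A$. The edge $uv$ lies in at least $k-4$ triangles, so $v$ has at least $k-4$ neighbours in $A$; together with $u$ this gives at least $k-3$ neighbours of $v$ in $\set{u}\cup A$. Since every neighbour of $v$ lies in $M_2(u)=\set{u}\cup A\cup B$, at most $3$ neighbours of $v$ lie in $B$. Summing over the $k$ vertices of $A$ then yields $e(A,B)\le 3k$.

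For the lower bound, I would show that every $w\in B$ has at least $k-6$ neighbours in $A$. Pick $v\in A\cap N(w)$. Each common neighbour of $v$ and $w$ is adjacent to $v\in N(u)$, hence lies in $M_2(u)$; it cannot equal $u$ since $u$ is not adjacent to $w$, so it lies in $A\cup B$. Of the at least $k-4$ common neighbours of $v$ and $w$, those lying in $B$ are in particular neighbours of $v$ in $B$, of which there are at most $3$ by the previous paragraph; hence at least $k-7$ common neighbours lie in $A$, and each is a neighbour of $w$ in $A$ distinct from $v$. Consequently $w$ has at least $(k-7)+1=k-6$ neighbours in $A$, and summing over $B$ gives $e(A,B)\ge(k-6)\card{B}$.

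Combining the two estimates yields $(k-6)\card{B}\le 3k$, so $\card{B}\le\tfrac{3k}{k-6}$; since $k\ge 9$ one checks elementarily that $\tfrac{3k}{k-6}\le k$ (equivalently $3\le k-6$), whence $\card{N_2(u)}\le k$ and $G\in\mathcal{G}(k)$. The hard part will be the localisation step in the lower bound: one must argue carefully that all common neighbours of $v$ and $w$ fall into $A\cup B$ and then exploit the already-established cap of $3$ on the number of $B$-neighbours of a vertex of $A$ to force $k-6$ neighbours of $w$ into $A$. Once that double count is set up correctly, the remaining arithmetic is routine.
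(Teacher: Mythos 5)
Your proof is correct and follows essentially the same double-counting of $e\bigl(N(u),N_2(u)\bigr)$ as the paper: the triangle condition caps each vertex of $A$ at $3$ neighbours in $B$ (so $e(A,B)\le 3k$) and forces each $w\in B$ to have many neighbours in $A$. The paper extracts $k-5$ rather than your $k-6$ per vertex of $B$ (observing that $w$ itself is one of the at most three $B$-neighbours of $v$, so only two common neighbours of $v$ and $w$ can lie in $B$), but your slightly weaker count still closes exactly at $k\ge 9$, which is all the statement requires.
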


\begin{proof}
Let $G$ be $k$-regular graph with $k\geq 9$
where every edge belongs to at least $k-4$ triangles.
Consider an arbitrary vertex~$v$ in~$G$. We will show that $|N_2(v)|\leq k$. Let
$e(N_1(v),N_2(v))$ denote the number of edges between $N_1(v)$ and $N_2(v)$.
Clearly, every vertex $u\in N_1(v)$ has at most 3~neighbors in $N_2(v)$, since the edge $uv$
belongs to at least $k-4$ triangles. This implies that $e(N_1(v),N_2(v))\leq 3k$.
Furthermore, for every neighbor~$w$ of~$u$ in~$N_2(v)$
the edge $uw$ belongs to at most $2$~triangles
with the third vertex in $N_2(v)$.
Therefore the vertex~$w$ must have at least $k-6$ neighbors in
$N_1(v)$ different from~$u$, since
the edge $uw$ belongs to at least $k-4$ triangles.
This implies that
$e(N_1(v),N_2(v))\geq (k-5)|N_2(v)|$. Then
$$3k\geq e(N_1(v),N_2(v))\geq (k-5)|N_2(v)|$$
which implies that $|N_2(v)|\leq k$ if $k\geq 9$.
Thus $G\in \mathcal{G}(k)$.
\end{proof}

Taking into consideration \cref{like:bondy}, \cref{gordon11} and \cref{cycle:extend}, we believe that
the following conjecture is true:
\begin{conjecture}
Let $G$ be a locally connected graph in $\mathcal{G}(k)$, $k\geq 9$.
Then $G$ is fully cycle extendable.
\end{conjecture}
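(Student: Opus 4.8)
The plan is to upgrade \cref{like:bondy} from an extension by one \emph{or} two vertices to an extension by exactly one vertex; once this is done, full cycle extendability follows immediately, since the triangle condition is free (local connectedness forces every neighbourhood to contain an edge, and $k\ge9\ge2$, so every vertex lies on a triangle). Thus the entire content reduces to the following strengthening: if $G$ is locally connected, $G\in\mathcal G(k)$ with $k\ge9$, and $C$ is a non-Hamiltonian cycle of length~$n$, then $G$ has a cycle of length $n+1$ containing $V(C)$.

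I would prove this by re-running the argument of \cref{like:bondy} under the weaker standing hypothesis that $C$ has \emph{no} extension of length $n+1$, dropping the clause that also forbade length $n+2$. First I would verify that the opening portion of that argument survives verbatim: the choice of $w_1$ and the triangle $w_1vz$ supplied by \cref{triangle}, the inequality $\card{N(v)\cap N(w_1^+)}\ge2$, and the edge-count between $W$ and $W^+$ all rest solely on \cref{lemma1} and on the non-existence of a length-$(n+1)$ cycle, so they carry over unchanged. The proof of \cref{like:bondy} invokes the absence of a length-$(n+2)$ cycle in exactly two places—when forcing the common neighbour $x$ of $v$ and $w_1^+$ onto $C$, and when forbidding the vertices $w_i^+$ from having neighbours off $C$—and it is precisely there that the argument must be reworked. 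In each case the former contradiction is replaced by an explicit external vertex (an $x\notin V(C)$ with $x\in N(v)\cap N(w_1^+)$, or a $u\notin V(C)$ adjacent to some $w_t^+$), so that a path such as $w_1vxw_1^+$ inserts two outside vertices and yields a cycle of length $n+2$ instead of the desired short extension.

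The crux, and the step I expect to be the real obstacle, is to convert each such length-$(n+2)$ configuration into a cycle of length $n+1$ through $V(C)$ (or to rule it out outright), after which the purely degree-theoretic contradiction that closes \cref{like:bondy}—giving $\card{N(v)}=p=k$ and $d(w_1)>p$—goes through untouched. This is where the strengthened hypothesis $k\ge9$ should be spent: it is the same slack that powers \cref{cycle:extend}, where the second-neighbourhood bound $\card{N_2(v)}\le k$ only becomes available once $k$ is large, and I would use an analogous count of the vertices at distance~$2$ from $v$, from the external vertex, and from the relevant $w_i^+$ to force an extra edge inside $M_2(w_1)$. Concretely, the aim is to show that such an edge lets one reroute $w_1vxw_1^+$ so that only one of $v,x$ is inserted—for instance by producing a cycle-neighbour of $v$ whose successor is again adjacent to $v$—thereby restoring a genuine contradiction with the no-$(n+1)$ assumption. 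Carrying this rerouting out uniformly in both breakdown cases, and checking that the rigid ``$v$ adjacent to every second vertex of $C$'' structure imposed by the edge-count cannot coexist with $k$-regularity once $k\ge9$, would complete the induction and, consistently with \cref{gordon11,cycle:extend}, establish the conjecture.
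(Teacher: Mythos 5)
First, note that the statement you are proving is stated in the paper only as a \emph{conjecture}: the authors offer no proof of it, so there is nothing in the paper to match your argument against, and any correct argument here would be new. Your proposal, however, is not a proof but a plan, and the plan defers exactly the step that constitutes the open content of the conjecture. You reduce the problem (correctly in spirit) to showing that a non-Hamiltonian cycle~$C$ of length~$n$ in a locally connected graph $G\in\mathcal{G}(k)$, $k\ge9$, always has an extension of length $n+1$, and you propose to re-run the proof of \cref{like:bondy} assuming only that no $(n+1)$-extension exists. But \cref{like:bondy} already guarantees an extension by one \emph{or} two vertices, so under your weaker hypothesis an $(n+2)$-extension genuinely may exist; the whole difficulty is to convert such a configuration into an $(n+1)$-extension or to exclude it. You yourself label this ``the crux'' and ``the real obstacle,'' and the rerouting argument you gesture at (``force an extra edge inside $M_2(w_1)$'' by a second-neighborhood count in the style of \cref{cycle:extend}, then reroute $w_1vxw_1^+$ so that only one outside vertex is inserted) is never carried out, nor is it clear how the hypothesis of \cref{cycle:extend} (every edge in at least $k-4$ triangles) would be simulated by local connectedness alone. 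A sketch that postpones the only genuinely open step does not establish the conjecture.

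There is also a concrete error in the part you claim carries over verbatim. In the proof of \cref{like:bondy}, the edge count between $W$ and $W^+$ ends with the equality $\sum_{i}\card{N(v)\cap N(w_i^+)}=e(W^+,W)$, and this equality needs $N(w_i^+)\cap N(v)\cap\bigl(V(G)\setminus V(C)\bigr)=\emptyset$; a common neighbour $u\notin V(C)$ of $v$ and $w_i^+$ only produces the cycle $w_ivuw_i^+\overrightarrow{C}w_i$ of length $n+2$, so excluding it uses precisely the no-$(n+2)$ clause of \cref{E3} that you have dropped. Under your weaker hypothesis you only get $e(W^+,W)\le\sum_i\card{N(v)\cap N(w_i^+)}$, the forcing step \cref{E8} collapses, and with it the rigid ``$v$ adjacent to every second vertex of $C$'' structure on which the rest of your outline (including the final degree contradiction $d(w_1)>p$) depends. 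So the portion you assert survives ``unchanged'' in fact breaks at the same place as the two breakdown points you do identify, and the conjecture remains open as far as your proposal goes.
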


\section*{Acknowledgment}
The authors thank Carl Johan Casselgren for helpful suggestions on this manu\-script.

\end{document}